\newtheorem{theorem}{Theorem}[section]
\newtheorem{hypothesis}[theorem]{Hypothesis}
\newtheorem{proposition}[theorem]{Proposition}
\newtheorem{lemma}[theorem]{Lemma}
\numberwithin{equation}{section}
\theoremstyle{remark}
\newtheorem{remark}[theorem]{Remark}
\theoremstyle{remark}
\newtheorem{example}[theorem]{Example}
\newcommand{\Ric}{\mathop{\mathrm{Ric}}\nolimits}
\newcommand{\lin}{\mathop{\mathrm{lin}}\nolimits}
\newcommand{\orb}{\mathop{\mathrm{orb}}\nolimits}
\newcommand{\II}{\mathop{\mathrm{II}}\nolimits}
\newcommand{\Ad}{\mathop{\mathrm{Ad}}\nolimits}
\newcommand{\pr}{\mathrm{pr}}
\author{Artem Pulemotov\thanks{School of Mathematics and Physics, The
University of Queensland, St Lucia,~QLD 4072,
Australia}~\thanks{Department of Mathematics, The University of
Chicago, 5734 South University Ave, Chicago,~IL
60637-1514, USA}~\thanks{The author is the recipient of an Australian Research Council Discovery Early-Career Researcher Award DE150101548.} \\
\small{\texttt{a.pulemotov@uq.edu.au}}}
\title{The Dirichlet problem for the prescribed Ricci curvature equation on cohomogeneity one manifolds}
\begin{document}

\maketitle

\begin{abstract}
Let $M$ be a domain enclosed between two principal orbits on a
cohomogeneity one manifold $M_1$. Suppose $T$ and $R$ are symmetric
invariant (0,2)-tensor fields on $M$ and $\partial
M$, respectively. The paper studies the prescribed Ricci curvature
equation $\Ric(G)=T$ for a Riemannian metric $G$
on~$M$ subject to the boundary condition $G_{\partial M}=R$
(the notation $G_{\partial M}$ here stands for the metric
induced by $G$ on $\partial M$). Imposing a standard
assumption on $M_1$, we describe a set
of requirements on $T$ and $R$ that guarantee global and local solvability.
\\~\\
{\bf Keywords:} Ricci curvature, Dirichlet problem, cohomogeneity
one.
\\~\\
{\bf 2010 Mathematics Subject Classification:} 53B20, 53C20, 58J32.
\end{abstract}

\section{Introduction}

Suppose $M$ is a smooth manifold of dimension~3 or higher (possibly with
boundary) and $T$ is a symmetric $(0,2)$-tensor field on $M$. The present
paper investigates the prescribed
Ricci curvature equation
\begin{align}\label{intro_PRC}
\Ric(G)=T,
\end{align}
where the unknown $G$ is a Riemannian metric on $M$. Mathematicians have been studying~\eqref{intro_PRC}
since at least the early 1980's. We invite the reader to
see~\cite{AB87,TA98} for the history of the subject. The list of
recent references not mentioned in~\cite{AB87,TA98} includes but is
not limited to~\cite{ED02,PhD03,YR08a,YR08b,RPKT09,AP15}.

The solvability of boundary-value problems for
equation~\eqref{intro_PRC} is, by and large, an unexplored topic.
The author of the present paper made progress on this topic
in~\cite{AP11}. The main theorems of~\cite{AP11} concern the local 
solvability of Dirichlet- and Neumann-type problems
for~\eqref{intro_PRC} (i.e., solvability in a neighbourhood of a boundary point on~$M$).

It is worth noting that D.~DeTurck's study of~\eqref{intro_PRC}
underlay his discovery of the DeTurck trick. In a similar fashion,
new knowledge about boundary-value problems
for~\eqref{intro_PRC} may help answer questions about the existence and uniqueness of solutions to boundary-value
problems for the Ricci flow and the Einstein equation. Such
questions were investigated
in~\cite{YS96,MA08,JC09,MBXCAP10,MA12,PG12,AP10,AP14} and other works. A
large number still remain open.

Let $M_1$ be a smooth connected
manifold of dimension~3 or higher with $\partial M_1=\emptyset$. Consider a compact Lie
group $\mathcal G$ acting on~$M_1$. Suppose the orbit space $M_1/\mathcal G$ is
one-dimensional. It is then customary to call $M_1$ a cohomogeneity
one manifold. Such manifolds enjoy numerous applications in geometry
and mathematical physics; see, e.g.,~\cite{LBB82,CB98,ADMW99,ADMW11} and the references of~\cite{CH10}. In what follows, we suppose $M$ is
the closure of a domain on $M_1$ contained between two principal
$\mathcal G$-orbits. The boundary of $M$ is then
equal to the union of these orbits. It will be convenient for us to assume that the tensor field $T$ introduced above is defined on all of~$M_1$, not just $M$. The purpose of the present paper is to study the global and local existence of solutions to a Dirichlet-type problem for
equation~\eqref{intro_PRC} on $M$. Our main results are stated as Theorems~\ref{thm_main},~\ref{thm_loc_ex_int} and~\ref{thm_loc_ex_bdy}.

\section{Main results}\label{sec_main_results}

Because the space $M_1/\mathcal G$ is
one-dimensional, it must be homeomorphic to the real line, the closed
interval, the half-line, or the circle. In the first and the fourth case, there are no singular orbits on $M_1$. For the sake of convenience, we will assume that
$M_1/\mathcal G$ is homeomorphic to $\mathbb R$. It is easy to
state analogues of our theorems in the situations where this assumption does not hold. Pick a point in $M_1$ and denote by $\mathcal H$ the isotropy group of this point. We will use the symbol $\mathfrak g$ for the Lie algebra of $\mathcal G$. Choose an
$\Ad(\mathcal G)$-invariant scalar product $Q$ on $\mathfrak g$. Suppose $\mathfrak p$ is the orthogonal complement of the Lie algebra of $\mathcal H$ in $\mathfrak g$ with respect to~$Q$. We standardly identify $\mathfrak p$ with the tangent space
of $\mathcal G/\mathcal H$ at $\mathcal H$. The isotropy representation of $\mathcal G/\mathcal H$ then yields
the structure of an $\mathcal H$-module on $\mathfrak p$. The following
requirement will be imposed throughout the rest of the paper.

\begin{hypothesis}\label{assum_decomp_p}
The $\mathcal H$-module $\mathfrak p$ appears as an orthogonal sum
\begin{align}\label{p_decomp}
\mathfrak p=\mathfrak p_1\oplus\cdots\oplus\mathfrak p_n
\end{align}
of pairwise non-isomorphic irreducible $\mathcal H$-modules $\mathfrak
p_1,\ldots,\mathfrak p_n$.
\end{hypothesis}

Roughly speaking, Hypothesis~\ref{assum_decomp_p} ensures that $\mathcal G$-invariant (0,2)-tensor fields on $\mathcal G/\mathcal H$ are diagonal; cf.~\eqref{T_form2} and~\eqref{R_form} below. This hypothesis is rather standard. It has come up in
several papers including~\cite{ADMW00,ADMW11}.

Let the tensor field $T$ be $\mathcal G$-invariant. Assume that it is possible to construct a diffeomorphism
\begin{align}\label{dif_def}
\Psi:\mathbb R\times(\mathcal G/\mathcal H)\to M_1
\end{align}
such that the map $\Psi(r,\cdot)$ is $\mathcal G$-equivariant for every
$r\in\mathbb R$ and
the equality
\begin{align}\label{T_form1}
\Psi^*T=dr\otimes dr+T_r,\qquad r\in\mathbb R,
\end{align}
holds true. Here, $T_r$ is a $(0,2)$-tensor field on
$\mathcal G/\mathcal H$ defined
for each $r\in\mathbb R$. It is fully determined by how it acts on
$\mathfrak p$. In view of Hypothesis~\ref{assum_decomp_p}, there exist smooth functions
$\phi_1,\ldots,\phi_n$ from $\mathbb R$ to $\mathbb R$ such
that
\begin{align}\label{T_form2}
T_r(X,Y)=\phi_1(r)\,Q\big(\pr_{\mathfrak
p_1}X,\pr_{\mathfrak
p_1}Y\big)+\cdots+\phi_n(r)\,Q\big(\pr_{\mathfrak
p_n}X,\pr_{\mathfrak p_n}Y\big),\qquad X,Y\in\mathfrak p.
\end{align}
The notation $\pr_{\mathfrak p_k}X$ and $\pr_{\mathfrak p_k}Y$
refers to the projections of $X$ and $Y$ onto $\mathfrak
p_k$ for $k=1,\ldots,n$.

If the tensor field $T$ is positive-definite, it is always possible to construct the diffeomorphism $\Psi$. Indeed, in this case, we can interpret $T$ as a Riemannian metric on $M_1$ and consider a unit speed geodesic $\eta$ with respect to this metric. Assuming $\eta$ is orthogonal to all the principal orbits, we define $\Psi(r,g\mathcal H)=g\eta(r)$ for all $r\in\mathbb R$ and $g\in\mathcal G$. This construction is quite standard. For example, it was used in~\cite{KGWZ02,CH10,ADMW11}. 

In what follows, we suppose that $\Psi$ is the identity map and
\begin{align}\label{man_M_def}
M=[0,\sigma]\times \mathcal G/\mathcal H
\end{align}
for some $\sigma>0$.  
This does not cause any loss of generality. Let $R$ be a symmetric positive-definite
$\mathcal G$-invariant $(0,2)$-tensor field on $\partial M$. We will use $R$ to impose a boundary condition on~\eqref{intro_PRC}. Denote by $R^0$ and $R^\sigma$ the restrictions of $R$ to $\{0\}\times \mathcal G/\mathcal H$ and
$\{\sigma\}\times
\mathcal G/\mathcal H$, respectively.
Given $\tau\in[0,\sigma]$, it will be convenient for us to identify the tangent spaces to $\{\tau\}\times\mathcal G/\mathcal H$  at the point $\{\tau\}\times\mathcal H$ with $\mathfrak p$ in the natural way. We observe that $R$ is fully determined by how
$R^0$ and $R^\sigma$ act on~$\mathfrak
p$. Thanks to Hypothesis~\ref{assum_decomp_p}, there exist positive numbers $a_1,\ldots,a_n$ and
$b_1,\ldots,b_n$ satisfying the equalities
\begin{align}\label{R_form}
R^0(X,Y)&=a_1^2\,Q\big(\pr_{\mathfrak
p_1}X,\pr_{\mathfrak p_1}Y\big)+\cdots+a_n^2\,Q\big(\pr_{\mathfrak
p_n}X,\pr_{\mathfrak p_n}Y\big), \notag \\
R^\sigma(X,Y)&=b_1^2\,Q\big(\pr_{\mathfrak
p_1}X,\pr_{\mathfrak p_1}Y\big)+\cdots+b_n^2\,Q\big(\pr_{\mathfrak
p_n}X,\pr_{\mathfrak p_n}Y\big),\qquad X,Y\in\mathfrak p.
\end{align}

Fix a number $\alpha>0$ such that
\begin{align}\label{bds_phi}
\big|\phi_i(r)\big|&\le\alpha,\qquad
i=1,\ldots,n,~r\in[0,\sigma],
\end{align}
along with a pair of numbers $\omega_1,\omega_2>0$ such that
\begin{align}\label{bds_ab}
\omega_1\le a_i,b_i\le\omega_2,\qquad i=1,\ldots,n.
\end{align}
Denote by $d_i$ the dimension of $\mathfrak p_i$ for $i=1,\ldots,n$. Given a Riemannian metric $G$ defined on a neighbourhood of $\partial M$, we write $G_{\partial M}$ for the metric
induced by $G$ on $\partial M$. Our first result is a sufficient condition for the global solvability of a Dirichlet-type problem for~\eqref{intro_PRC}.

\begin{theorem}\label{thm_main}
There exist functions $\rho_0:(0,\infty)^2\to(0,\infty)$ and
$\sigma_0:(0,\infty)^5\to(0,\infty)$, both independent of the tensor
fields $T$ and $R$, such that the following statement is satisfied:
if the formulas
\begin{align}\label{hyp_thm_deriv}
\bigg|\frac
d{dr}\phi_i(r)\bigg|&\le c_1\sigma,\qquad |a_i-b_i|\le c_2\sigma^2,\qquad i=1,\ldots,n,~r\in[0,\sigma],
\end{align}
and the formulas
\begin{align}\label{hyp_thm_bounds}
\sum_{i=1}^nd_i\bigg(\max\{\phi_i(r),0\}
+\frac{\omega_2^2}{\omega_1^2}\min\{\phi_i(r),0\}\bigg)&>\rho_0(\omega_1,\omega_2),\notag \\
\sigma&<\sigma_0(\alpha,\omega_1,\omega_2,c_1,c_2),\qquad r\in[0,\sigma],
\end{align}
hold for some $c_1,c_2>0$, then the manifold $M$ supports a $\mathcal G$-invariant Riemannian
metric $G$ solving the equation $\Ric(G)=T$ on $M$
under the boundary condition $G_{\partial M}=R$.
\end{theorem}

\begin{remark}
When proving Theorem~\ref{thm_main}, we will obtain explicit
expressions for $\rho_0$ and $\sigma_0$. These expressions (at least
the one for $\sigma_0$) will be rather unsightly.
\end{remark}

\begin{remark}\label{rem_intui}
The first formula in~\eqref{hyp_thm_deriv} essentially forbids the
part of $T$ tangent to the $\mathcal G$-orbits to change dramatically from
one orbit to another. The second one says that $R^0$
should not be very different from~$R^\sigma$. Note that formulas~\eqref{hyp_thm_deriv} are
automatically satisfied when $R^0$ coincides with
$R^\sigma$ on $\mathfrak p$ and $\phi_1,\ldots,\phi_n$ are
constant. The meaning of~\eqref{hyp_thm_bounds} is that the
tensor field $T$ has to be large in the directions tangent to the
$\mathcal G$-orbits and small in the direction transverse to the
$\mathcal G$-orbits.
\end{remark}

Our second result establishes the local solvability of~\eqref{intro_PRC} in the interior of $M$. Given $\tau\in[0,\sigma]$,
we denote by $\Gamma^\tau$ the $\mathcal G$-orbit $\{\tau\}\times \mathcal G/\mathcal H$
on $M$.

\begin{theorem}\label{thm_loc_ex_int}
For each $\tau\in(0,\sigma)$, there exists a $\mathcal G$-invariant
Riemannian metric $G^\tau$ on $M$ such that the equaity $\Ric(G^\tau)=T$ holds on some neighbourhood of $\Gamma^\tau$.
\end{theorem}

Next, we establish the local solvability of~\eqref{intro_PRC} near $\partial M$.

\begin{theorem}\label{thm_loc_ex_bdy}
There exists a $\mathcal G$-invariant Riemannian metric $G^{bdy}$ on $M$ such that
$\Ric(G^{bdy})=T$ on some neighbourhood of $\partial M$ and $G_{\partial M}^{bdy}=R$.
\end{theorem}

The proofs of Theorems~\ref{thm_loc_ex_int} and~\ref{thm_loc_ex_bdy}
will rely on Proposition~\ref{prop_local_ex} appearing below.  This proposition will also demonstrate that a $\mathcal G$-invariant metric solving the prescribed Ricci
curvature equation near a $\mathcal G$-orbit on $M$ is uniquely determined by the metric it induces on this $\mathcal G$-orbit and by the orbit's second fundamental form.

\begin{remark}
Assume $\mathcal G$ is the special orthogonal group $\mathrm{SO}(d)$ and
$M_1$ coincides with $\mathbb R^d$ less a closed ball around the origin. One
may then be able to study
boundary-value problems for~\eqref{intro_PRC} with the methods
of~\cite{JCDDT94}; see also~\cite{RH84}. These methods consist in reducing the prescribed Ricci curvature equation to a first-order ordinary differential equation for a single real-value function. The authors of~\cite{JCDDT94} were able to achieve such a reduction by exploiting the fact that $\mathrm{SO}(d)$-invariant metrics on $\mathbb R^d$ are globally conformally flat. In essence, their arguments relied on a clever change of variable in the prescribed Ricci curvature equation.
\end{remark}

\begin{remark}\label{rem_abelian}
Instead of requiring that Hypothesis~\ref{assum_decomp_p} hold for $\mathcal G/\mathcal H$, we may assume $\mathcal G/\mathcal H$ is an abelian Lie group.
The $\mathcal H$-module $\mathfrak p$ can then be written in the
form~\eqref{p_decomp} with the $\mathcal H$-modules $\mathfrak p_k$ being
one-dimensional for all $k=1,\ldots,n$. As before, we suppose there exists a
diffeomorphism $\Psi$ satisfying formula~\eqref{T_form1}. In our
current situation, however, it is not necessarily the case that
there are smooth functions $\phi_1,\ldots,\phi_n$ from
$[0,\sigma]$ to $\mathbb R$ obeying equality~\eqref{T_form2}. Assume
that such functions do exist. Suppose also that one can find
positive numbers $a_1,\ldots,a_n$ and $b_1,\ldots,b_n$ such
that~\eqref{R_form} holds. Thus, we demand that $T$ and $R$ be
diagonal with respect to~\eqref{p_decomp}. It is then possible to
prove the assertions of Theorems~\ref{thm_loc_ex_int},
\ref{thm_loc_ex_bdy}, and~\ref{thm_main} using the reasoning of
Section~\ref{sec_proofs}.
\end{remark}

\begin{remark}
Instead of assuming the existence of $\Psi$ above, one may
assume there is a diffeomorphism such
that~\eqref{T_form1} holds with this diffeomorphism substituted for
$\Psi$ and a minus sign in front of $dr\otimes dr$.
The techniques in the present paper seem to be effective for
treating this case. We will not dwell on any further details.
\end{remark}

\begin{example}
Given an interval $I\subset\mathbb R$ and a number $\epsilon\ge0$, denote
\begin{align*}
\mathcal D_I=\big\{(x,y)\in\mathbb
R^2\,|\,\sqrt{x^2+y^2}\in I\big\},\qquad \mathcal S_\epsilon=\{(x,y)\in\mathbb
R^2\,|\,x^2+y^2=\epsilon^2\}.
\end{align*}
Assume $\mathcal G$ is the product
$\mathrm{SO}(2)\times\mathrm{SO}(2)$. Define $M_1$ to equal $\mathcal D_{(\frac\chi2,2)}\times\mathcal S_1$ with $\chi>0$. The standard action of $\mathrm{SO}(2)$ on $\mathbb R^2$ gives rise to
an action of $\mathcal G$ on $M_1$. The orbits of this action are the tori
$\mathcal S_\epsilon\times\mathcal S_1$ with $\epsilon\in\big(\frac\chi2,2\big)$. The isotropy group of an arbitrarily chosen point in $M_1$ consists of nothing but the identity element in $\mathcal G$. Consider a $(0,2)$-tensor field $T$ on~$M_1$. It is
convenient for us to assume that $T$ is positive-definite, although
this assumption can be relaxed. Suppose~$T$ is rotationally symmetric in the sense
of~\cite{JDDKAY10,AP11}. This means $T$ is $\mathcal G$-invariant and
diagonal with respect to the cylindrical coordinates on $\mathcal
D_{[0,2]}\times\mathcal S_1$. We define $M$ to equal $\mathcal D_{[\chi,1]}\times\mathcal S_1$. Thus, $M$ is a solid torus less a neighbourhood of the core circle. Consider a symmetric positive-definite $(0,2)$-tensor field $R$ on $\partial M$. We suppose $R$ is $\mathcal G$-invariant and
diagonal in the coordinates induced on $\partial M$ by the
cylindrical coordinates on $\mathcal
D_{[0,2]}\times\mathcal S_1$. In the current setting, Theorem~\ref{thm_main} (along with Remark~\ref{rem_abelian}) yields a sufficient condition for the solvability of the equation $\Ric(G)=T$ on all of $M$, subject to $G_{\partial M}=R$. No such condition previously appeared in the literature. Theorems~\ref{thm_loc_ex_int}
and~\ref{thm_loc_ex_bdy} imply local solvability; cf.~\cite{AP11}.
\end{example}

\section{The proofs}\label{sec_proofs}

In what follows, we assume $T$ is positive-definite and $c_1=c_2=1$. Thus, the function $\sigma_0$, whose existence Theorem~\ref{thm_main} asserts, becomes a function of three variables, not five. These assumptions will make our arguments easier to follow. Removing them is straightforward. 

\subsection{Preparatory material}

We begin by stating a formula for the Ricci curvature of a $\mathcal G$-invariant metric on $M$. This formula will involve two arrays of numbers, $(\beta_k)_{k=1}^n$ and $\big(\gamma_{k,l}^m\big)_{k,l,m=1}^n$. In order to introduce them, denote by $[\cdot,\cdot]$ and $P$ the Lie bracket and the
Killing form of the Lie algebra $\mathfrak g$. The irreducibility of the summands in decomposition~\eqref{p_decomp} implies the existence of nonnegative numbers
$\beta_1,\ldots,\beta_n$
such that
\begin{align*}
P(X,Y)=-\beta_kQ(X,Y),\qquad k=1,\ldots,n,~X,Y\in\mathfrak p_k.
\end{align*}
Because the group $\mathcal G$ is compact and Hypothesis~\ref{assum_decomp_p} holds, at least one of these numbers must be strictly positive. Suppose $d$ is the
dimension of $M$. We choose a $Q$-orthonormal basis $(\tilde
e_i)_{i=1}^{d-1}$ of the space $\mathfrak p$ adapted to~\eqref{p_decomp}. In addition to $\beta_1,\ldots,\beta_n$, let us define
\begin{align*}
\gamma_{k,l}^m=\frac1{d_k}\sum Q([\tilde e_{\iota_k},\tilde
e_{\iota_l}],\tilde
e_{\iota_m})^2
\end{align*}
for $m,k,l=1,\ldots,n$. The sum here is taken over all $\iota_k$, $\iota_l$, and $\iota_m$ such that $\tilde e_{\iota_k}\in\mathfrak p_k$, $\tilde e_{\iota_l}\in\mathfrak p_l$, and $\tilde e_{\iota_m}\in\mathfrak p_m$.
Note that $\gamma_{k,l}^m$ is independent of the choice of $(\tilde e_i)_{i=1}^{d-1}$.

Consider a Riemannian metric $G$ on $M$. Suppose
$h,f_1,\ldots,f_n$ are smooth functions from $[0,\sigma]$ to
$(0,\infty)$. Let $G$ be defined by the equality
\begin{align}\label{cal_G_form1}
G=h^2(r)\,dr\otimes dr+G_r,\qquad
r\in[0,\sigma].
\end{align}
The tensor field $G_r$ in the right-hand side is the
$\mathcal G$-invariant Riemannian metric on $\mathcal G/\mathcal H$ such that
\begin{align}\label{cal_G_form2}
G_r(X,Y)=f_1^2(r)\,Q\big(\pr_{\mathfrak
p_1}X,\pr_{\mathfrak
p_1}Y\big)+\cdots+f_n^2(r)\,Q\big(\pr_{\mathfrak
p_n}X,\pr_{\mathfrak p_n}Y\big),\qquad X,Y\in\mathfrak p.
\end{align}
In the sequel, the prime
next to a real-valued function on $[0,\sigma]$ will denote the
derivative of this function.

\begin{lemma}\label{lemma_Ricci_comp}
The Ricci curvature of the Riemannian metric $G$
given by~\eqref{cal_G_form1} and~\eqref{cal_G_form2} obeys the
equality
\begin{align*}
\Ric(G)=&\Ric^{\lin}+\Ric_r^{\orb},\qquad r\in[0,\sigma],
\end{align*}
where $\Ric^{\lin}$ is the (0,2)-tensor field on $[0,\sigma]$ satisfying
\begin{align*}
\Ric^{\lin}=-\sum_{k=1}^nd_k\left(\frac{f_k''}{f_k}-\frac{h'f_k'}{hf_k}\right)dr\otimes dr
\end{align*}
and $\Ric_r^{\orb}$ is the $\mathcal G$-invariant (0,2)-tensor field on
$\mathcal G/\mathcal H$ satisfying
\begin{align*}
\Ric_r^{\orb}&(X,Y) \\ &=\sum_{i=1}^n\Bigg(\frac{\beta_i}2+\sum_{k,l=1}^n\gamma_{i,k}^l\frac{f_i^4-2f_k^4}{4f_k^2f_l^2}
-\frac{f_if_i'}h\sum_{k=1}^n
d_k\frac{f_k'}{hf_k}+\frac{f_i'^2}{h^2}-\frac{f_if_i''}{h^2}+\frac{f_ih'f_i'}{h^3}\Bigg)Q\big(\pr_{\mathfrak
p_i}X,\pr_{\mathfrak p_i}Y\big)
\end{align*}
for $X,Y\in\mathfrak p$.
\end{lemma}

\begin{proof}
The terms involving $dr$ are computed and listed under Proposition~1.14 in~\cite{KGWZ02}. Let us find $\Ric_r^{\orb}$. Hypothesis~\ref{assum_decomp_p} implies
\begin{align*}
\Ric_r^{\orb}(X,Y)=0
\end{align*}
when $X\in\mathfrak p_i$ and $Y\in\mathfrak p_j$ for $i,j=1,\ldots,n$ such that $i\ne j$. Remark~1.16 in~\cite{KGWZ02} states that
\begin{align*}
\Ric_r^{\orb}(X,X)=\Bigg(\frac{\beta_i}2+\sum_{k,l=1}^n\gamma_{i,k}^l\frac{f_i^4-2f_k^4}{4f_k^2f_l^2}
-\frac{f_if_i'}h\sum_{k=1}^n
d_k\frac{f_k'}{hf_k}+\frac{f_i'^2}{h^2}-\frac{f_if_i''}{h^2}+\frac{f_ih'f_i'}{h^3}\Bigg)Q(X,X)
\end{align*}
when $X\in\mathfrak p_i$ and $i=1,\ldots,n$.
In view of Hypothesis~\ref{assum_decomp_p}, the desired expression for $\Ric_r^{\orb}$ immediately follows.
\end{proof}

If the Ricci
curvature of $G$ coincides with $T$, then
Lemma~\ref{lemma_Ricci_comp} yields the equalities
\begin{align}\label{Ric_ODE}
-\sum_{k=1}^nd_k\left(\frac{f_k''}{f_k}-\frac{h'f_k'}{hf_k}\right)&=1,\notag
\\
\frac{\beta_i}2+\sum_{k,l=1}^n\gamma_{i,k}^l\frac{f_i^4-2f_k^4}{4f_k^2f_l^2}
-\frac{f_if_i'}h\sum_{k=1}^nd_k\frac{f_k'}{hf_k}+\frac{f_i'^2}{h^2}-\frac{f_if_i''}{h^2}+\frac{f_ih'f_i'}{h^3}&=\phi_i,\qquad
i=1,\ldots,n.
\end{align}
The following result is essentially a restatement of the contracted second Bianchi identity.

\begin{lemma}\label{lemma_Bianchi}
Assume the Ricci curvature of the metric $G$ given
by~\eqref{cal_G_form1} and~\eqref{cal_G_form2} obeys the equality
\begin{align*}
\Ric(G)=\bar\sigma(r)\,dr\otimes dr+T_r,\qquad
r\in[0,\sigma],
\end{align*}
with $\bar\sigma$ being a smooth function on $[0,\sigma]$ and $T_r$ satisfying~\eqref{T_form2}. Then
\begin{align}\label{Bianchi_lem_ineq}
\frac{\bar\sigma'}{2h^2}-\frac{\bar\sigma
h'}{h^3}=\sum_{k=1}^nd_k\left(\frac{\phi_k'}{2f_k^2}-\frac{\bar\sigma
f_k'}{h^2f_k}\right).
\end{align}
\end{lemma}

\begin{proof}
Fix a $Q$-orthonormal basis $(\tilde e_i)_{i=1}^{d-1}$ of the space
$\mathfrak p$ adapted to the decomposition~\eqref{p_decomp}. Recall
that we identify $\mathfrak p$ with the tangent space of $\mathcal G/\mathcal H$ at
$\mathcal H$. Given $r_0\in[0,\sigma]$, let us construct a $\mathcal G$-invariant
$G$-orthonormal frame field $(e_i)_{i=1}^d$ on a
neighbourhood $U$ of $(r_0,\mathcal H)$ in $M$ so that the following requirements are met:
\begin{enumerate}
\item
The equality $e_i=\big(0,\frac1{f_i(r)}\tilde e_i\big)$ holds at
$(r,\mathcal H)$ for every $i=1,\ldots,d-1$ as long as
$(r,\mathcal H)\in U$.
\item
The vector field $e_d$ coincides with
$\big(\frac1{h(r)}\frac\partial{\partial r},0\big)$ on $U$.
\end{enumerate}
The contracted second Bianchi identity then implies
\begin{align*}
\sum_{i=1}^d\nabla_{e_i}\Ric(G)(e_i,e_d)=\frac12e_d\Bigg(\sum_{i=1}^d\Ric(G)(e_i,e_i)\Bigg).
\end{align*}
The symbol $\nabla$ in the left-hand side denotes the covariant
derivative in the tensor bundle over $M$ given by the
Levi-Civita connection of $G$. We calculate and see
that the equalities
\begin{align*}
\sum_{i=1}^d\nabla_{e_i}\Ric(G)(e_i,e_d)&=\sum_{i=1}^d e_i(\Ric(G)(e_i,e_d))
-\sum_{i=1}^d\Ric(G)(\nabla_{e_i}e_i,e_d)
\\ &\hphantom{=}~
-\sum_{i=1}^d\Ric(G)(e_i,\nabla_{e_i}e_d) \\
&=e_d(\Ric(G)(e_d,e_d)) -\sum_{i=1}^dG(\nabla_{e_i}e_i,e_d)\Ric(G)(e_d,e_d)
\\
&\hphantom{=}~-\sum_{i=1}^dG(\nabla_{e_i}e_d,e_i)\Ric(G)(e_i,e_i) \\
&=\frac{\bar\sigma'}{h^3}-\frac{2\bar\sigma
h'}{h^4}+\sum_{k=1}^nd_k\frac{\bar\sigma
f_k'}{h^3f_k}-\sum_{k=1}^nd_k\frac{f_k'}{hf_k^3}\phi_k,
\end{align*}
as well as the equality
\begin{align*}
\frac12e_d\left(\sum_{i=1}^d\Ric(G)(e_i,e_i)\right)=\sum_{k=1}^nd_k\left(\frac{\phi_k'}{2hf_k^2}-\frac{f_k'}{hf_k^3}\phi_k\right)
+\frac{\bar\sigma'}{2h^3}-\frac{\bar\sigma h'}{h^4}\,,
\end{align*}
hold at $(r_0,\mathcal H)$. The assertion of the lemma
follows immediately.
\end{proof}

Denote by $f$ and $\phi$ the
functions $(f_1,\ldots,f_n)$ and $(\phi_1,\ldots,\phi_n)$ acting from
$[0,\sigma]$ to $(0,\infty)^n$.
We can rewrite the second equality in~\eqref{Ric_ODE} as
\begin{align}\label{sys_to_solve}
f''(r)&=F(h(r),h'(r),f(r),f'(r),\phi(r)),\qquad r\in[0,\sigma],
\end{align}
with $F:(0,\infty)\times\mathbb R\times(0,\infty)^n\times\mathbb
R^{n+n}\to\mathbb R^n$ given by the formulas
\begin{align*}
F(p,q,x,y,z)&=(F_1(p,q,x,y,z),\ldots,F_n(p,q,x,y,z)), \\
F_i(p,q,x,y,z)&=\frac{\beta_ip^2}{2x_i}+p^2\sum_{k,l=1}^n\gamma_{i,k}^l\frac{x_i^4-2x_k^4}{4x_ix_k^2x_l^2}
-\sum_{k=1}^nd_k\frac{y_iy_k}{x_k}+\frac{y_i^2}{x_i}+\frac{qy_i}p-\frac{p^2}{x_i}z_i,\qquad
i=1,\ldots,n, \\
p\in(0,\infty),~q&\in\mathbb
R,~x=(x_1,\ldots,x_n)\in(0,\infty)^n,~y=(y_1,\ldots,y_n)\in\mathbb
R^n,~z=(z_1,\ldots,z_n)\in\mathbb R^n.
\end{align*}
The prime next to a vector-valued function means component-wise differentiation. Combining the two equalities in~\eqref{Ric_ODE}, we find
\begin{align}\label{sys_to_slv(h)}
H_1(f(r),f'(r))=h^2(r)H_2(f(r),\phi(r)),\qquad r\in[0,\sigma],
\end{align}
with the mappings $H_1:(0,\infty)^n\times\mathbb R^n\to\mathbb R$
and $H_2:(0,\infty)^n\times\mathbb R^n\to\mathbb R$ defined by the
formulas
\begin{align*}
H_1(x,y)&=1-\sum_{k=1}^nd_k\left(\sum_{l=1}^nd_l\frac{y_ky_l}{x_kx_l}
-\frac{y_k^2}{x_k^2}\right),~
H_2(x,z)=\sum_{k=1}^nd_k\Bigg(\frac{z_k}{x_k^2}-\frac{\beta_k}{2x_k^2}
-\sum_{l,m=1}^n\gamma_{k,l}^m\frac{x_k^4-2x_l^4}{4x_k^2x_l^2x_m^2}\Bigg),
\\ x&=(x_1,\ldots,x_n)\in(0,\infty)^n,~y=(y_1,\ldots,y_n)\in\mathbb
R^n,~z=(z_1,\ldots,z_n)\in\mathbb R^n.
\end{align*}
It will be convenient for us to denote 
\begin{align*}
H(x,y,z)=\sqrt{H_1(x,y)H_2^{-1}(x,z)}
\end{align*}
for $x\in(0,\infty)^n$, $y\in\mathbb R^n$, and $z\in\mathbb R^n$
such that $H_2(x,z)\ne0$ and $H_1(x,y)H_2^{-1}(x,z)\ge0$.

Solving~\eqref{Bianchi_lem_ineq} for $h'(r)$ and substituting~1 for $\bar\sigma(r)$, we arrive at the following conclusion: if $\Ric(G)$ coincides with $T$, then
\begin{align}\label{eq_Bianchi_K}
h'(r)=K(h(r),f(r),f'(r),\phi'(r)),\qquad r\in[0,\sigma].
\end{align}
Here, $K:(0,\infty)^{1+n}\times\mathbb R^{n+n}\to\mathbb R$ is given
by
\begin{align*}
K(p,x,y,w)&=\sum_{i=1}^nd_i\left(\frac{py_i}{x_i}-\frac{p^3w_i}{2x_i^2}\right),\\
p&\in(0,\infty),~x=(x_1,\ldots,x_n)\in(0,\infty)^n,~y=(y_1,\ldots,y_n)\in\mathbb
R^n,~w=(w_1,\ldots,w_n)\in\mathbb R^n.
\end{align*}

Let $a$ and $b$ denote the vectors $(a_1,\ldots,a_n)$ and
$(b_1,\ldots,b_n)$ with the numbers $a_1,\ldots,a_n$ and
$b_1,\ldots,b_n$ coming from~\eqref{R_form}. If the metric $G_{\partial M}$ induced by $G$ on $\partial M$ equals $R$,
then
\begin{align}\label{BC_on_f}
f(0)=a,\qquad f(\sigma)=b.
\end{align}
We also point out that, whenever~\eqref{sys_to_slv(h)} holds, we
must have
\begin{align}\label{BC_on_h}
H_1(f(0),f'(0))=h^2(0)H_2(f(0),\phi(0)).
\end{align}

\subsection{Proof of Theorem~\ref{thm_main} (less the key lemma)}\label{subsec_less_lemma}

Intuitively, our plan for proving Theorem~\ref{thm_main} is to find a metric $G$ satisfying two
requirements. The first one is that $\Ric(G)$ equal $T$ in
the directions tangent to the $\mathcal G$-orbits. The other is that
$G$ and $T$ obey the contracted second Bianchi identity.
When both of these requirements are met, it must be the case that
$\Ric(G)=T$. We define $\rho_0$ by the formula
\begin{align*}
\rho_0(p,q)=2\sum_{k=1}^nd_k\Bigg(\frac{\beta_kq^2}{2p^2}
+\sum_{l,m=1}^n\gamma_{k,l}^m\frac{q^6}{4p^6}\Bigg),\qquad p,q\in(0,\infty).
\end{align*}

\begin{lemma}\label{lemma_exist_hf}
Assume that inequalities~\eqref{hyp_thm_deriv} and the first inequality in~\eqref{hyp_thm_bounds} are satisfied. There exists a
function $\sigma_0:(0,\infty)^3\to(0,\infty)$ such that the
following statement holds: if $\sigma$ is less than
$\sigma_0(\alpha,\omega_1,\omega_2)$, then we can find smooth
$f:[0,\sigma]\to(0,\infty)^n$ and $h:[0,\sigma]\to(0,\infty)$
solving equations~\eqref{sys_to_solve} and \eqref{eq_Bianchi_K} under
the boundary conditions~\eqref{BC_on_f} and~\eqref{BC_on_h}.
\end{lemma}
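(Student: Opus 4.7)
My plan is to solve the coupled system \eqref{sys_to_solve}--\eqref{eq_Bianchi_K} by a shooting argument in which $v:=f'(0)$ plays the role of parameter, closing the loop with a Banach fixed-point argument that exploits the smallness of $\sigma$. The first step will be to solve the compatibility condition \eqref{BC_on_h} for $h(0)$. The definition of $\rho_0(\omega_1,\omega_2)$ is calibrated so that under \eqref{bds_ab} and the first inequality of \eqref{hyp_thm_bounds}, the quantity $H_2(a,\phi(0))$ is bounded below by a positive constant depending only on $\omega_1,\omega_2$; meanwhile $H_1(a,v)$ is a perturbation of $1$ for $v$ near $0$. Hence $h_0(v):=\sqrt{H_1(a,v)/H_2(a,\phi(0))}$ is smooth, well defined, and bounded between two positive constants on a uniform neighbourhood of the origin in $\mathbb R^n$. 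Since the first part of \eqref{hyp_thm_deriv} gives $|b-a|\le\sigma^2\sqrt n$, the natural shooting target $v_*:=(b-a)/\sigma$ has norm $O(\sigma)$ and sits inside that neighbourhood as soon as $\sigma$ is small.

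Next I will fix $v$ in a ball of radius comparable to $\sigma$ around $v_*$ and solve, on $[0,\sigma]$, the first-order system obtained by coupling \eqref{sys_to_solve} and \eqref{eq_Bianchi_K} with initial data $(f(0),f'(0),h(0))=(a,v,h_0(v))$. A Gr\"onwall/Picard argument on a short interval will confine $(f,f',h)$ to a compact subregion of the phase space on which $F$ and $K$ are uniformly $C^\infty$-bounded; the key inputs are $|\phi|\le\alpha$ from \eqref{bds_phi} and $|\phi'|\le\sigma$ from the second part of \eqref{hyp_thm_deriv}, which control the $-p^2z_i/x_i$ term of $F$ and the $-p^3w_i/(2x_i^2)$ term of $K$, respectively. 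Tracking the resulting estimates gives an explicit $\sigma_0(\alpha,\omega_1,\omega_2)$ and yields a shooting map $\Sigma(v):=f(\sigma;v)$, smooth in $v$.

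The crux will then be a second-order Taylor expansion $\Sigma(v)=a+\sigma v+\sigma^2\Psi(v,\sigma)$, with $\Psi$ and $D_v\Psi$ bounded uniformly for $\sigma<\sigma_0$ and $v\in B(v_*,\sigma)$, the latter bound coming from the variational equation, whose right-hand side is uniformly controlled by the same a priori estimates. The boundary condition $f(\sigma)=b$ rewrites as the fixed-point equation $v=v_*-\sigma\Psi(v,\sigma)$. For $\sigma_0$ sufficiently small, the right-hand side is a contraction of $B(v_*,\sigma)$ into itself (Lipschitz constant $\le\sigma\|D_v\Psi\|_\infty$, image radius $\le\sigma\|\Psi\|_\infty$), so Banach's theorem produces a unique $v$ with $\Sigma(v)=b$. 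The resulting $(f,h)$ satisfies \eqref{sys_to_solve}, \eqref{eq_Bianchi_K}, \eqref{BC_on_f} by construction, while \eqref{BC_on_h} is built into the definition of $h_0(v)$.

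The step I expect to be the main obstacle is the a priori estimate in the IVP: upgrading abstract short-time existence to an \emph{explicit} $\sigma_0(\alpha,\omega_1,\omega_2)$ uniform over $v\in B(v_*,\sigma)$ and small enough for the contraction to close. The right-hand sides $F$ and $K$ are rational in $(f,h)$ with singularities at $f_i=0$ and $h=0$, so the estimate must simultaneously keep $f$ away from $0$ (starting from $f(0)=a\ge\omega_1$) and keep $h$ away from $0$ (starting from the lower bound on $h_0(v)$ established in the first step, and using that $K=O(\sigma)$ along the orbit so $h$ varies by $O(\sigma^2)$). Coordinating the three smallness conditions---on $\sigma$, on the radius of the shooting ball, and on the admissible range of $v$---is where the real work lies; once those bounds are in place, the rest is a routine implicit function theorem argument.
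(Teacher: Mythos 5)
Your proposal is sound, but it follows a genuinely different route from the paper. The paper never linearizes in a shooting parameter: it centers the unknowns around the affine interpolant $\bar f(r)=a\frac{\sigma-r}{\sigma}+b\frac{r}{\sigma}$ and an auxiliary $\bar h$ obtained by solving \eqref{eq_Bianchi_K} along $\bar f$ (Lemma~\ref{lemma_bar_h}), and then runs a \emph{Schauder} fixed-point argument on a ball of radius $\sigma^2\Sigma$ in the Banach space of pairs ($C^1$ function, continuous function). The iteration map solves, at each step, the two-point Dirichlet problem $\xi''=\tilde F(\ldots)$, $\xi(0)=\xi(\sigma)=0$ using the Green's-function bounds of~\cite[Section~XII.4]{PH64}, and defines the $h$-component by an integral formula anchored at the compatibility value $H(\bar f(0)+\xi(0),\bar f'(0)+\xi'(0),\phi(0))$; compactness of the image comes from Arzel\`a--Ascoli. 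Your shooting scheme instead reduces everything to a Banach contraction in the finite-dimensional parameter $v=f'(0)$, with $h(0)$ slaved to $v$ through \eqref{BC_on_h}. The trade-offs: your route needs smooth dependence of the flow on initial data (the variational equation) to control $D_v\Psi$, whereas Schauder needs only continuity; in exchange you get uniqueness of the shooting parameter in the ball, which the paper neither claims nor needs, and you avoid the infinite-dimensional compactness argument. The step you flag as the main obstacle --- explicit, $v$-uniform a priori bounds keeping $(f,h)$ in a compact region away from $f_i=0$ and $h=0$, and an explicit $\sigma_0(\alpha,\omega_1,\omega_2)$ --- is precisely the quantitative content of the paper's Lemmas~\ref{lemma_bar_h} and~\ref{lemma_bd_F_Theta} (the constants $\rho_1$, $\sigma_1$, $\Theta$, $\epsilon_0$), so it is a fillable computation rather than a gap; your observations that $H_2(a,\phi(0))\ge\rho_0(\omega_1,\omega_2)/(2\omega_2^2)$ by the choice of $\rho_0$, that $|\phi'|\le\sigma$ follows from \eqref{hyp_thm_deriv} after the change of variable $r=\sigma t$, and that $v_*=(b-a)/\sigma=O(\sigma)$ are all correct and are exactly the inputs the paper uses.
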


We will present the proof of Lemma~\ref{lemma_exist_hf} in
Section~\ref{subsec_pf_lemma}. Meanwhile, fix a function $\sigma_0$
satisfying the assertion of this lemma. Suppose $\sigma$ is less
than $\sigma_0(\alpha,\omega_1,\omega_2)$. Let
$f:[0,\sigma]\to(0,\infty)^n$ and $h:[0,\sigma]\to(0,\infty)$ be
smooth functions obeying~\eqref{sys_to_solve}, \eqref{eq_Bianchi_K}, \eqref{BC_on_f} and~\eqref{BC_on_h}. We define the metric $G$ on $M$ through~\eqref{cal_G_form1}--\eqref{cal_G_form2}. It is
easy to see that the Ricci curvature of $G$ must
equal
\begin{align*}
\bar\sigma(r)\,dr\otimes dr+T_r,\qquad r\in[0,\sigma],
\end{align*}
for some $\bar\sigma:[0,\sigma]\to\mathbb R$. The induced metric
$G_{\partial M}$ coincides with $R$. The proof of Theorem~\ref{thm_main} will be
complete if we demonstrate that $\bar\sigma(r)=1$ for all $r\in[0,\sigma]$.

Lemma~\ref{lemma_Bianchi} implies
\begin{align}\label{eq_w_Bianchi}
\bar\sigma'=\frac{2\bar\sigma
h'}h+\sum_{i=1}^nd_i\left(\frac{h^2\phi_i'}{f_i^2}-\frac{2\bar\sigma
f_i'}{f_i}\right).
\end{align}
Thanks to~\eqref{eq_Bianchi_K}, this formula will still hold if we replace $\bar\sigma$ in it by the function identically equal to~1 on $[0,\sigma]$. Furthermore, invoking Lemma~\ref{lemma_Ricci_comp} and the boundary
conditions~\eqref{BC_on_f}--\eqref{BC_on_h}, we find
\begin{align*}
\bar\sigma(0)&=-\sum_{k=1}^nd_k\left(\frac{f_k''(0)}{a_k}-\frac{h'(0)f_k'(0)}{h(0)a_k}\right) \\
&=-\sum_{k=1}^nd_k\left(\frac{\beta_kh^2(0)}{2a_k^2}+h^2(0)\sum_{l,m=1}^n\gamma_{k,l}^m\frac{a_k^4-2a_l^4}{4a_k^2a_l^2a_m^2}
-\sum_{l=1}^nd_l\frac{f_k'(0)f_l'(0)}{a_ka_l}+\frac{f_k'^2(0)}{a_k^2}-\frac{h^2(0)}{a_k^2}\phi_k(0)\right)
\\ &=h^2(0)H_2(a,\phi(0))
+(1-H_1(a,f'(0)))=1.
\end{align*}
The standard theorems on the uniqueness of solutions to ordinary differential equations now yield $\bar\sigma(r)=1$ for $r\in[0,\sigma]$.

\subsection{Proof of Lemma~\ref{lemma_exist_hf}}\label{subsec_pf_lemma}

From now on and until the end of
Section~\ref{subsec_pf_lemma}, we assume that inequalities~\eqref{hyp_thm_deriv} and the first inequality
in~\eqref{hyp_thm_bounds} are
satisfied. Let
$\bar f:[0,\sigma]\to\mathbb R^n$ be defined by
\begin{align*}
\bar f(r)=a\frac{\sigma-r}\sigma+b\frac r\sigma\,,\qquad
r\in[0,\sigma].
\end{align*}
We seek the function $f$, whose existence Lemma~\ref{lemma_exist_hf} asserts, in a neighbourhood of $\bar f$. This, in particular, will help us ensure the positivity of the components of $f$. Similarly, we look for the function $h$ in a neighbourhood of the function $\bar h$ to be introduced in Lemma~\ref{lemma_bar_h}. Our arguments will involve the constants
\begin{align*}
H_0&=H\big(\bar f(0),\bar
f'(0),\phi(0)\big), \\
K_0&=\sup_{p\in\left[\frac{H_0}2,\frac{3H_0}2\right]}\sup_{r\in[0,\sigma]}\big|K\big(p,\bar
f(r),\bar f'(r),\phi'(r)\big)\big|.
\end{align*}
The second inequality in~\eqref{hyp_thm_deriv} and the
first inequality in~\eqref{hyp_thm_bounds} imply that $H_0$ is well-defined under the assumptions of Lemma~\ref{lemma_bar_h}. Recall that the letter $d$ stands for the dimension of $M$. It
is evident that $\sum_{i=1}^nd_i=d-1$.

\begin{lemma}\label{lemma_bar_h}
Let $\rho_1,\sigma_1>0$ be given by the formulas
\begin{align*}
\rho_1&=\max\left\{4\Bigg(\sum_{k=1}^nd_k\Bigg(\frac\alpha{\omega_1^2}
+\sum_{l,m=1}^n\gamma_{k,l}^m\frac{\omega_2^4}{2\omega_1^6}\Bigg)\Bigg)^{\frac12}
,\frac94\left(\frac{\rho_0(\omega_1,\omega_2)}{2\omega_2^2}\right)^{-\frac12}\right\},
\\ \sigma_1&=\min\left\{1,\frac{\omega_1}{4d}\,,
\frac{2\omega_1^2}{\big(2\rho_1^2\omega_1+\rho_1^4\big)(d-1)}\right\}.
\end{align*}
If $\sigma\le\sigma_1$, then the problem
\begin{align}\label{eq_bar_h}
\bar h'(r)&=K\big(\bar
h(r),\bar f(r),\bar f'(r),\phi'(r)\big),\qquad r\in[0,\sigma], \notag \\
\bar h(0)&=H_0,
\end{align}
has a unique smooth solution $\bar
h:[0,\sigma]\to\big(\frac1{\rho_1},\rho_1\big)$.
\end{lemma}

\begin{proof}
Assume $\sigma<\sigma_1$. Employing the standard theory of ordinary differential equations, it is easy to show that the
problem~\eqref{eq_bar_h} has a unique smooth solution on the
interval
$\left[0,\min\left\{\sigma,\frac{H_0}{2\varkappa}\right\}\right]$ as long as $K_0\le\varkappa$. The
values of this solution must lie in
$\left[\frac{H_0}2,\frac{3H_0}2\right]$.

Our assumptions imply
\begin{align*}
\frac1{\rho_1}\le\frac{H_0}2<\frac{3H_0}2\le\rho_1.
\end{align*}
In view of~\eqref{hyp_thm_deriv}, the estimate
\begin{align*}
K_0\le\frac{\big(2\rho_1\omega_1\sigma+\rho_1^3\sigma\big)(d-1)}{2\omega_1^2}
\end{align*}
holds true. Keeping these facts in mind, we conclude that
problem~\eqref{eq_bar_h} has a unique smooth solution
\begin{align*}
\bar
h:\left[0,\min\left\{\sigma,\frac{2\omega_1^2}{\big(2\rho_1^2\omega_1\sigma+\rho_1^4\sigma\big)(d-1)}\right\}\right]
\to\left(\frac1{\rho_1},\rho_1\right).
\end{align*}
At the same time, whenever $\sigma\le\sigma_1$, the equality
\begin{align*}
\sigma=\min\left\{\sigma,
\frac{2\omega_1^2}{\big(2\rho_1^2\omega_1\sigma+\rho_1^4\sigma\big)(d-1)}\right\}
\end{align*}
is satisfied. This means $\bar h$ is actually defined on
$[0,\sigma]$.
\end{proof}

From this moment on and until
the end of Section~\ref{subsec_pf_lemma}, let us assume that
$\sigma\le\sigma_1$. It then makes sense to talk about $\bar h$. Our
plan is to prove, for small $\sigma$, the existence of smooth
$u:[0,\sigma]\to\mathbb R^n$ and $v:[0,\sigma]\to\mathbb R$ solving
the equations
\begin{align}\label{sys_centered}
u''(r)&=F\big(\bar h(r)+v(r),\bar h'(r)+v'(r),\bar f(r)+u(r),\bar
f'(r)+u'(r),\phi(r)\big),\notag \\
v'(r)&=-\bar h'(r)+K\big(\bar h(r)+v(r),\bar f(r)+u(r),\bar
f'(r)+u'(r),\phi'(r)\big),\qquad r\in[0,\sigma],
\end{align}
under the boundary conditions
\begin{align}\label{BC_centered}
u(0)&=u(\sigma)=0, \notag \\
v(0)&=-\bar h(0)+H\big(\bar f(0)+u(0),\bar f'(0)+u'(0),\phi(0)\big).
\end{align}
We will then set $f=\bar f+u$ and $h=\bar h+v$. It is obvious that
these functions will obey~\eqref{sys_to_solve}, \eqref{eq_Bianchi_K}, \eqref{BC_on_f} and~\eqref{BC_on_h}.

Our proof of the existence of $u$ and $v$ will rely on the Schauder
fixed point theorem. Let us introduce the space $\mathcal B$ of all
the pairs $(\upsilon_1,\upsilon_2)$ such that
$\upsilon_1:[0,\sigma]\to\mathbb R^n$ is $C^1$-differentiable and
$\upsilon_2:[0,\sigma]\to\mathbb R$ is continuous. We endow
$\mathcal B$ with the norm
\begin{align*}
|(\upsilon_1,\upsilon_2)|_{\mathcal
B}=\sup_{r\in[0,\sigma]}|\upsilon_1(r)|_{\mathbb
R^n}+\sigma\sup_{r\in[0,\sigma]}|\upsilon_1'(r)|_{\mathbb
R^n}+\sup_{r\in[0,\sigma]}|\upsilon_2(r)|,
\end{align*}
where $|\cdot|_{\mathbb R^n}$ is the Euclidean norm in $\mathbb
R^n$. Denote by $B(L)$ the closed ball in $\mathcal B$ of radius $L>0$
centered at~0. We will now define a map $\mathcal C:B(L)\to\mathcal B$
and show that $\mathcal C$ has a fixed point $(u,v)$ under
appropriate conditions. The functions $u$ and $v$ will
satisfy~\eqref{sys_centered} and~\eqref{BC_centered}.

Assume the radius $L$ is less than or equal to
$\frac\sigma2\min\big\{\omega_1,\frac1{\rho_1}\big\}$. Given
$(\mu,\nu)\in B(L)$, let $u_{\mu,\nu}$ be the unique solution of the
problem
\begin{align}\label{eq_centered_xi}
u_{\mu,\nu}''(r)&=\bar F\big(\bar h(r)+\nu(r),\bar
f(r)+\mu(r),\bar
f'(r)+\mu'(r),\phi(r),\phi'(r)\big),\qquad r\in[0,\sigma], \notag \\
u_{\mu,\nu}(0)&=u_{\mu,\nu}(\sigma)=0,
\end{align}
where
\begin{align*}
\bar F(p,x,y,z,w)&=F(p,K(p,x,y,w),x,y,z),\qquad
p\in(0,\infty),~x\in(0,\infty)^n,~y,z,w\in\mathbb R^n.
\end{align*}
It is obvious that such a solution exists. Moreover, it is easy to write down an explicit formula for it (the formula is quite lengthy, and we will not present it here; the reader may find it in, e.g.,~\cite[Section~XII.4]{PH64}). We will set $\mathcal
C(\mu,\nu)=(u_{\mu,\nu},v_{\mu,\nu})$ for a properly chosen
$v_{\mu,\nu}:[0,\sigma]\to\mathbb R$. Before we can
describe~$v_{\mu,\nu}$, however, we need to state the following
auxiliary result.

\begin{lemma}\label{lemma_bd_F_Theta}
Let $\Theta$ be given by the formula
\begin{align*}
\Theta=\sqrt{d}\max_{i=1,\ldots,n}\bigg|\frac{4\beta_i\rho_1^2}{\omega_1}+1,536\rho_1^2\sum_{k,l=1}^n\gamma_{i,k}^l\frac{\omega_2^4}{\omega_1^5}
+2\omega_1+\left(2\omega_1+2\omega_1^2
+8\rho_1^2\right)(d-1)+\frac{8\alpha\rho_1^2}{\omega_1}\bigg|.
\end{align*}
If $(\mu,\nu)$ lie in $B(L)$, then the estimate
\begin{align}\label{bd_F_Theta}
\sup_{r\in[0,\sigma]}&\big|\bar F\big(\bar h(r)+\nu(r),\bar
f(r)+\mu(r),\bar f'(r)+\mu'(r),\phi(r),\phi'(r)\big)\big|_{\mathbb
R^n}\le\Theta
\end{align}
holds true. Moreover, in this case, we have
\begin{align}\label{bnds_Hartman}
|u_{\mu,\nu}(r)|_{\mathbb
R^n}\le\frac{\sigma^2}8\Theta,\qquad|u_{\mu,\nu}'(r)|_{\mathbb
R^n}\le\frac\sigma2\Theta,\qquad r\in[0,\sigma].
\end{align}
\end{lemma}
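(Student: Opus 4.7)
The plan is to prove Lemma~\ref{lemma_bd_F_Theta} in two essentially independent stages. The first stage establishes the pointwise bound (\ref{bd_F_Theta}) on $\tilde F$; the second converts it into the $C^0$ and $C^1$ estimates (\ref{bnds_Hartman}) for $\xi_{\mu,\nu}$ via the Green's function of the Dirichlet problem $\xi'' = g$, $\xi(0) = \xi(\sigma) = 0$ on $[0,\sigma]$. The second stage is routine once the first is done, so the substance lies in stage one.

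For stage one, I would first extract uniform bounds on every argument fed into $\tilde F$, using the restrictions $\sigma \le \sigma_1$ and $L \le \frac{\sigma}{2}\min\{\omega_1, 1/\rho_1\}$. Since $|\mu(r)|_{\mathbb R^n} \le L \le \omega_1/2$, each component of $\bar f(r) + \mu(r)$ is trapped in an interval bounded below by $\omega_1/2$ and above by $2\omega_2$; since $\sigma|\mu'(r)|_{\mathbb R^n} \le L$ and $|\bar f'_i(r)| \le \sigma$ by the first inequality in (\ref{hyp_thm_deriv}), and since $\sigma \le \omega_1/(4d)$ from the definition of $\sigma_1$, each component of $\bar f'(r) + \mu'(r)$ is controlled by $\omega_1$; Lemma~\ref{lemma_bar_h} together with $|\nu(r)| \le L \le 1/(2\rho_1)$ confines $\bar h(r) + \nu(r)$ to an interval contained in $[1/(2\rho_1), 2\rho_1]$; and (\ref{bds_phi})--(\ref{hyp_thm_deriv}) give $|\phi_i(r)| \le \alpha$ and $|\phi_i'(r)| \le \sigma$. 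Feeding these uniform bounds into the explicit formula for $K$ yields a uniform bound on $K(\bar h + \nu, \bar f + \mu, \bar f' + \mu', \phi')$; substituting that along with the remaining bounds into the six summands of $F_i(p, K, x, y, z)$ produces, term by term, the six blocks visible in $\Theta_i$ after taking the Euclidean norm in $i = 1, \ldots, n$.

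For stage two, I would invoke the Green's function $G(r,s) = \min(r,s)(\sigma - \max(r,s))/\sigma$ for the two-point Dirichlet problem on $[0,\sigma]$, giving the representation $\xi_{\mu,\nu}(r) = -\int_0^\sigma G(r,s) g(s)\,ds$ with $g$ denoting the right-hand side of (\ref{eq_centered_xi}). The elementary identities $\int_0^\sigma G(r,s)\,ds = r(\sigma - r)/2 \le \sigma^2/8$ and $\int_0^\sigma |\partial_r G(r,s)|\,ds = (r^2 + (\sigma - r)^2)/(2\sigma) \le \sigma/2$, combined with $|g(r)|_{\mathbb R^n} \le \Theta$ from stage one, then deliver both estimates in (\ref{bnds_Hartman}) at once.

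The main obstacle I anticipate is the bookkeeping in stage one: the nested structure $\tilde F = F(p, K(p,\cdot,\cdot,\cdot),\cdot,\cdot,\cdot)$ and the double sum $\sum_{k,l}\gamma_{i,k}^l$ make it easy to misplace constants, so one has to be disciplined about which piece of the bound on $L$ and which inequality from (\ref{hyp_thm_deriv}) is invoked at each step. Conceptually, however, the lemma is a purely quantitative corollary of the explicit formulas defining $F$ and $K$, coupled with a standard Dirichlet Green's function estimate, so no genuinely new ideas are required.
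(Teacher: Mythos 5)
Your proposal is correct and follows the same route as the paper, whose proof simply states that \eqref{bd_F_Theta} is a direct consequence of the definitions of $F$ and $K$ and that \eqref{bnds_Hartman} follows from the arguments in Hartman's Section~XII.4 (i.e., exactly the Green's function bounds $\int_0^\sigma G(r,s)\,ds\le\sigma^2/8$ and $\int_0^\sigma|\partial_rG(r,s)|\,ds\le\sigma/2$ you invoke). Your write-up merely makes explicit the term-by-term bookkeeping that the paper leaves implicit, and the resulting blocks do match the stated $\Theta_i$.
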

\begin{proof}
Inequality~\eqref{bd_F_Theta} is a straightforward consequence of the
definition of $\bar F$. To obtain~\eqref{bnds_Hartman}, it suffices to write down an explicit formula for $u_{\mu,\nu}$ and perform elementary estimation of its terms (for the second part of~\eqref{bnds_Hartman}, one needs to differentiate before estimating). We refer to~\cite[Section~XII.4]{PH64} for the details of this argument.
\end{proof}

From now on and until the end of Section~\ref{subsec_pf_lemma}, we
assume
\begin{align}\label{bd_sigma_epsilon0}
\sigma\le\min\left\{\sigma_1,\sqrt{\frac{\omega_1}\Theta}\,,\frac{\omega_1}{2d\Theta}\right\}.
\end{align}
Given $(\mu,\nu)\in B(L)$, let us introduce
$v_{\mu,\nu}:[0,\sigma]\to\mathbb R$ through the formula
\begin{align}\label{def_zeta_mu,nu}
v_{\mu,\nu}(r)=&-\bar h(0)+H\big(\bar f(0)+u_{\mu,\nu}(0),\bar
f'(0)+u_{\mu,\nu}'(0),\phi(0)\big) \notag \\
&+\int_0^r\big(-\bar h'(s)+K\big(\bar h(s)+\nu(s),\bar
f(s)+u_{\mu,\nu}(s),\bar
f'(s)+u_{\mu,\nu}'(s),\phi'(s)\big)\big)\,ds,\qquad
r\in[0,\sigma].
\end{align}
Lemma~\ref{lemma_bd_F_Theta} and
inequality~\eqref{bd_sigma_epsilon0} imply the estimates
\begin{align*}
\sup_{r\in[0,\sigma]}|u_{\mu,\nu}(r)|_{\mathbb
R_n}\le\frac{\omega_1}2\,,\qquad
\sup_{r\in[0,\sigma]}|u_{\mu,\nu}'(r)|_{\mathbb R_n}\le\frac{\omega_1}{4d}\,,
\end{align*}
which ensure that the right-hand side of~\eqref{def_zeta_mu,nu} is
well-defined (indeed, the expression $H\big(\bar f(0),y,\phi(0)\big)$ is
well-defined and positive whenever $|y|_{\mathbb R^n}\le\frac{\omega_1}{2d}$). We now set $\mathcal
C(\mu,\nu)=(u_{\mu,\nu},v_{\mu,\nu})$. Our next goal is to show that, when $\sigma$ is sufficiently small and the radius
$L$ is appropriately chosen, the map $\mathcal C$ satisfies the conditions of the Schauder theorem.

Suppose $\theta>0$ is
a constant obeying the inequalities
\begin{align}\label{H_Lip_const}
|H(x,y,z\big)-H(x,\hat
y,z\big)|&\le\theta|y-\hat y|_{\mathbb R^n}, \notag \\
|H(x,y,z\big)-H(x,\hat
y,z\big)|&\le\theta\sum_{k,l=1}^n|y_ky_l-\hat y_k\hat
y_l|, \notag \\
x&\in[\omega_1,\omega_2]^n, \notag \\ y&=(y_1,\ldots,y_n)\in\bigg[-\frac{\omega_1}{2d},\frac{\omega_1}{2d}\bigg]^n,~\hat
y=(\hat y_1,\ldots,\hat y_n)\in\bigg[-\frac{\omega_1}{2d},\frac{\omega_1}{2d}\bigg]^n,\notag
\\ z&\in\bigg\{(z_1,\ldots,z_n)\in[0,\alpha]^n\,\bigg|\,\sum_{k=1}^nd_kz_k\ge\rho_0(\omega_1,\omega_2)\bigg\},
\end{align}
and the inequality
\begin{align}\label{K_Lip_const}
|K(p,x,y,w)&-K(\hat p,\hat x,\hat y,w)|\le\theta(|p-\hat
p|+|x-\hat x|_{\mathbb R^n}+|y-\hat y|_{\mathbb R^n}), \notag \\
p,\hat p&\in\bigg[\frac1{2\rho_1},2\rho_1\bigg],~x,\hat
x\in\bigg[\frac{\omega_1}2,2\omega_2\bigg]^n,~y,\hat
y\in\bigg[-\frac{\omega_1}{2d},\frac{\omega_1}{2d}\bigg]^n,~w\in[-1,1]^n.
\end{align}
It is obvious that such a $\theta$ exists. We define
\begin{align*} \Sigma&=\Theta+\theta n^2(2\Theta+\Theta^2+\omega_1), \\
\sigma_0(\alpha,\omega_1,\omega_2)&=
\min\left\{\sigma_1,\sqrt{\frac{\omega_1}\Theta}\,,\frac{\omega_1}{2d\Theta}\,,\frac{\omega_1}{2\Sigma}\,,\frac1{2\rho_1\Sigma}\right\}.
\end{align*}
Note that $\sigma_0$ is the function whose existence (along with $\rho_0$) Lemma~\ref{lemma_exist_hf} asserts. Let us also set $L_0=\sigma^2\Sigma$. From now on, we will assume the second inequality
in~\eqref{hyp_thm_bounds} holds, i.e., $\sigma<\sigma_0(\alpha,\omega_1,\omega_2)$. This implies, in particular, that
$L_0$ cannot exceed
$\frac\sigma2\min\big\{\omega_1,\frac1{\rho_1}\big\}$.

\begin{lemma}
The image $\mathcal CB(L_0)$ is contained in $B(L_0)$.
\end{lemma}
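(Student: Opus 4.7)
The plan is to verify directly that, for every $(\mu,\nu)\in B$, the pair $(\xi_{\mu,\nu},\zeta_{\mu,\nu})$ has $\mathcal B$-norm at most $L=\sigma^2\Sigma$. Lemma~\ref{lemma_bd_F_Theta} already controls the $\xi$-part of the norm: combining its two estimates yields
\[
\sup_{r\in[0,\sigma]}|\xi_{\mu,\nu}(r)|_{\mathbb R^n}+\sigma\sup_{r\in[0,\sigma]}|\xi_{\mu,\nu}'(r)|_{\mathbb R^n}\le\tfrac{5}{8}\sigma^2\Theta<\sigma^2\Theta.
\]
So the remaining task is to bound $\sup_{r\in[0,\sigma]}|\zeta_{\mu,\nu}(r)|$ by $\sigma^2\theta_1n^2(\Theta+\Theta^2)+\sigma^2\theta_2(\omega_1+\Theta)$; adding this to the inequality above and recalling the definition of $\Sigma$ will then give $|(\xi_{\mu,\nu},\zeta_{\mu,\nu})|_{\mathcal B}\le\sigma^2\Sigma=L$.

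To estimate $\zeta_{\mu,\nu}(0)$, I would use that $\xi_{\mu,\nu}(0)=0$ and, by the initial condition in~\eqref{eq_bar_h}, that $\bar h(0)=H(\bar f(0),\bar f'(0),\phi(0))$; thus $\zeta_{\mu,\nu}(0)$ collapses to the single difference
\[
H\bigl(\bar f(0),\bar f'(0)+\xi_{\mu,\nu}'(0),\phi(0)\bigr)-H\bigl(\bar f(0),\bar f'(0),\phi(0)\bigr).
\]
The decisive point is that $\bar f'(0)=(b-a)/\sigma$, so the first inequality in~\eqref{hyp_thm_deriv} forces $|\bar f'(0)|_{\mathbb R^n}=O(\sigma)$; combined with $|\xi_{\mu,\nu}'(0)|_{\mathbb R^n}=O(\sigma\Theta)$ from Lemma~\ref{lemma_bd_F_Theta}, each product difference $|y_ky_l-\hat y_k\hat y_l|$ entering the \emph{second} Lipschitz bound in~\eqref{H_Lip_const}, with $y=\bar f'(0)+\xi_{\mu,\nu}'(0)$ and $\hat y=\bar f'(0)$, is of order $\sigma^2(\Theta+\Theta^2)$. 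Summing the $n^2$ such terms and multiplying by $\theta_1$ then produces the bound $|\zeta_{\mu,\nu}(0)|\le\sigma^2\theta_1n^2(\Theta+\Theta^2)$.

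For the integral in the definition of $\zeta_{\mu,\nu}$, I would rewrite the integrand by using the defining ODE of $\bar h$ in~\eqref{eq_bar_h}, namely $\bar h'(s)=K(\bar h(s),\bar f(s),\bar f'(s),\phi'(s))$; what is left is a pure difference of two values of $K$ whose arguments differ by $\nu(s)$, $\xi_{\mu,\nu}(s)$, and $\xi_{\mu,\nu}'(s)$. The smallness conditions $\sigma\le\omega_1/(2\Sigma)$ and $\sigma\le 1/(2\rho_1\Sigma)$ embedded in $\sigma_0(\alpha,\omega_1,\omega_2)$, together with Lemma~\ref{lemma_bar_h}, keep all arguments inside the domain on which~\eqref{K_Lip_const} is valid. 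Applying that Lipschitz estimate and integrating over $[0,\sigma]$ yields a contribution bounded by $\sigma\theta_2(L+\sigma\Theta)\le\sigma^2\theta_2(\omega_1+\Theta)$, the last inequality using $\sigma\Sigma\le\omega_1/2$.

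The main obstacle is not the mechanics of the estimates but the choice of Lipschitz bound: a naive application of the first, linear bound in~\eqref{H_Lip_const} would only yield $|\zeta_{\mu,\nu}(0)|=O(\sigma\Theta)$, which cannot fit inside the radius $L=\sigma^2\Sigma$. It is precisely the quadratic form of the second bound in~\eqref{H_Lip_const}, combined with the $O(\sigma)$ smallness of $\bar f'(0)$ supplied by the boundary-data condition $|a_i-b_i|\le\sigma^2$ in~\eqref{hyp_thm_deriv}, that generates the extra factor of $\sigma$ needed to close the estimate.
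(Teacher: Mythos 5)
Your proposal is correct and follows essentially the same route as the paper: reduce to bounding $\sup_r|\zeta_{\mu,\nu}(r)|$ via Lemma~\ref{lemma_bd_F_Theta}, cancel $\bar h(0)$ and $\bar h'$ against the initial value and ODE in~\eqref{eq_bar_h}, and then exploit the quadratic Lipschitz bound in~\eqref{H_Lip_const} together with $|\bar f'(0)|_{\mathbb R^n}=O(\sigma)$ (from $|a_i-b_i|\le\sigma^2$) to extract the extra factor of $\sigma$, plus~\eqref{K_Lip_const} for the integral term. Your closing remark about why the linear Lipschitz bound would not suffice correctly identifies the one genuinely delicate point of the argument.
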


\begin{proof}
Take a pair $(\mu,\nu)$ from $B(L_0)$. Our goal is to show that $\mathcal
C(\mu,\nu)$ lies in $B(L_0)$. Lemma~\ref{lemma_bd_F_Theta} yields the
estimate
\begin{align*}
|(u_{\mu,\nu},v_{\mu,\nu})|_{\mathcal
B}\le\sigma^2\Theta+\sup_{r\in[0,\sigma]}|v_{\mu,\nu}(r)|.
\end{align*}
Remembering the second formula in~\eqref{hyp_thm_deriv}, we also find
\begin{align*}
|v_{\mu,\nu}(r)|&\le\big|-\bar h(0)+H\big(\bar
f(0)+u_{\mu,\nu}(0),\bar
f'(0)+u_{\mu,\nu}'(0),\phi(0)\big)\big| \\
&\hphantom{=}~+\sigma\sup_{s\in[0,r]}\big|-\bar h'(s)+K\big(\bar
h(s)+\nu(s),\bar f(s)+u_{\mu,\nu}(s),\bar
f'(s)+u_{\mu,\nu}'(s),\phi'(s)\big)\big| \\
&=\big|H\big(\bar f(0),\bar f'(0)+u_{\mu,\nu}'(0),\phi(0)\big)-H\big(\bar f(0),\bar f'(0),\phi(0)\big)\big| \\
&\hphantom{=}~+\sigma\sup_{s\in[0,r]}\big|K\big(\bar
h(s)+\nu(s),\bar f(s)+u_{\mu,\nu}(s),\bar f'(s)+u_{\mu,\nu}'(s),\phi'(s)\big)-K\big(\bar
h(s),\bar f(s),\bar f'(s),\phi'(s)\big)\big|
\\ &\le\theta\sum_{k,l=1}^n(\sigma|(u_{\mu,\nu})_k'(0)|+\sigma|(u_{\mu,\nu})_l'(0)|+|(u_{\mu,\nu})_k'(0)(u_{\mu,\nu})_l'(0)|)
\\ &\hphantom{=}~+\sigma\theta\sup_{s\in[0,r]}(|\nu(s)|+|u_{\mu,\nu}(s)|_{\mathbb R^n}+|u_{\mu,\nu}'(s)|_{\mathbb R^n})
\\ &\le\sigma^2\theta n^2(2\Theta+\Theta^2+\omega_1),\qquad
r\in[0,\sigma],
\end{align*}
where $(u_{\mu,\nu})_k$ and $(u_{\mu,\nu})_l$ are the $k$th and
the $l$th components of $u_{\mu,\nu}$. Consequently, it must be
the case that
\begin{align*}
|(u_{\mu,\nu},v_{\mu,\nu})|_{\mathcal B}
\le\sigma^2(\Theta+\theta n^2(2\Theta+\Theta^2+\omega_1))=
\sigma^2\Sigma=L_0.
\end{align*}
\end{proof}

\begin{lemma}
The map $\mathcal C:B(L_0)\to\mathcal B$ is continuous.
\end{lemma}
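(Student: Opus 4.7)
The plan is to treat the two components of $\mathcal{C}(\mu,\nu)=(\xi_{\mu,\nu},\zeta_{\mu,\nu})$ separately. First I would show that $(\mu,\nu)\mapsto\xi_{\mu,\nu}$ is continuous as a map from $B$ into $C^1([0,\sigma],\mathbb{R}^n)$ equipped with the natural norm $\sup|\cdot|_{\mathbb{R}^n}+\sigma\sup|\cdot'|_{\mathbb{R}^n}$. Continuity of $(\mu,\nu)\mapsto\zeta_{\mu,\nu}$ in the sup norm on $[0,\sigma]$ would then follow by passing to the limit inside formula~\eqref{def_zeta_mu,nu}. Together these two facts give continuity of $\mathcal{C}$ in $\mathcal{B}$.

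For the first step I would take a sequence $(\mu_j,\nu_j)$ in $B$ converging in $\mathcal{B}$ to a limit $(\mu,\nu)\in B$. Because every $(\mu_j,\nu_j)$ and the limit lie in $B$, and because $\sigma$ obeys~\eqref{bd_sigma_epsilon0}, the arguments fed into $\tilde F$ in the right-hand side of~\eqref{eq_centered_xi} remain in a fixed compact subset of the domain of $\tilde F$. On this subset $\tilde F$ is uniformly continuous, so the right-hand sides $g_j$ defining $\xi_{\mu_j,\nu_j}$ converge uniformly on $[0,\sigma]$ to the right-hand side $g$ defining $\xi_{\mu,\nu}$. The difference $\chi_j=\xi_{\mu_j,\nu_j}-\xi_{\mu,\nu}$ solves the linear Dirichlet problem $\chi_j''=g_j-g$, $\chi_j(0)=\chi_j(\sigma)=0$, and the Green's-function estimates from~\cite[Section~XII.4]{PH64} that already underlie~\eqref{bnds_Hartman} yield
\begin{align*}
\sup_{r\in[0,\sigma]}|\chi_j(r)|_{\mathbb{R}^n}\le\tfrac{\sigma^2}{8}\sup_{r\in[0,\sigma]}|g_j(r)-g(r)|_{\mathbb{R}^n},\qquad \sup_{r\in[0,\sigma]}|\chi_j'(r)|_{\mathbb{R}^n}\le\tfrac{\sigma}{2}\sup_{r\in[0,\sigma]}|g_j(r)-g(r)|_{\mathbb{R}^n}.
\end{align*}
Hence $\chi_j\to 0$ in $C^1$, as desired.

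For the second step, with $\xi_{\mu_j,\nu_j}\to\xi_{\mu,\nu}$ in $C^1$ and $\nu_j\to\nu$ uniformly in hand, I would invoke the Lipschitz estimates~\eqref{H_Lip_const} and~\eqref{K_Lip_const}. The choice $L\le\frac{\sigma}{2}\min\{\omega_1,1/\rho_1\}$ together with~\eqref{bd_sigma_epsilon0} guarantees that each argument of $H$ and $K$ occurring in~\eqref{def_zeta_mu,nu} takes values in the compact sets specified in those estimates; therefore the integrand and boundary term in~\eqref{def_zeta_mu,nu} depend continuously on $(\mu,\nu)$, which gives $\sup_{r\in[0,\sigma]}|\zeta_{\mu_j,\nu_j}(r)-\zeta_{\mu,\nu}(r)|\to 0$.

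The main technical obstacle I anticipate is purely one of bookkeeping: at every stage I have to check that the arguments plugged into $\tilde F$, $H$, and $K$ actually lie in the compact sets on which the requisite smoothness or Lipschitz properties have been established. This is the same verification that was needed to show $\mathcal{C}B\subset B$ and to justify the definition of $\zeta_{\mu,\nu}$, so it is routine rather than delicate, and no new analytic ingredient beyond the Green's-function bound and the compact-set continuity of $\tilde F$, $H$, $K$ is required.
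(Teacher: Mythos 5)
Your argument is correct and takes essentially the same route as the paper: both treat the two components of $\mathcal C$ separately, control $\xi_{\mu,\nu}$ through the Green's-function bounds of \cite[Section~XII.4]{PH64}, and control $\zeta_{\mu,\nu}$ through the Lipschitz estimates~\eqref{H_Lip_const} and~\eqref{K_Lip_const}. The only (inessential) difference is that the paper fixes a Lipschitz constant $\theta_3$ for $\tilde F$ on the relevant compact set and thereby obtains an explicit Lipschitz bound for $\mathcal C$, whereas you invoke uniform continuity of $\tilde F$ and argue sequentially, which yields continuity without the quantitative modulus.
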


\begin{proof}
Without loss of generality, assume the constant $\theta$ fixed above satisfies
\begin{align}\label{F_Lip_const}
\big|\bar F(p,x,y,z,w)&-\bar F(\hat p,\hat x,\hat
y,z,w)\big|_{\mathbb R^n}\le\theta(|p-\hat p|+|x-\hat x|_{\mathbb
R^n}+|y-\hat y|_{\mathbb R^n}), \notag
\\
p,\hat p&\in\left[\frac1{2\rho_1},2\rho_1\right],~x,\hat
x\in\left[\frac{\omega_1}2,2\omega_2\right]^n,~y,\hat
y\in\left[-\frac{\omega_1}2,\frac{\omega_1}2\right]^n,~z\in[0,\alpha]^n,~w\in[-1,1]^n.
\end{align}
Suppose the pairs $(\mu_1,\nu_1)$ and $(\mu_2,\nu_2)$ lie in $B(L_0)$.
The first formula in~\eqref{eq_centered_xi}, along with inequalities~\eqref{bnds_Hartman} and~\eqref{F_Lip_const},
imply
\begin{align*}
\sup_{r\in[0,\sigma]}|u_{\mu_1,\nu_1}(r)-u_{\mu_2,\nu_2}(r)|_{\mathbb
R^n}&\le\frac{\sigma\theta}8|(\mu_1,\nu_1)-(\mu_2,\nu_2)|_{\mathcal
B},
\\
\sup_{r\in[0,\sigma]}|u_{\mu_1,\nu_1}'(r)-u_{\mu_2,\nu_2}'(r)|_{\mathbb
R^n}&\le \frac{\theta}2|(\mu_1,\nu_1)-(\mu_2,\nu_2)|_{\mathcal B}.
\end{align*}
Using~\eqref{def_zeta_mu,nu},~\eqref{H_Lip_const},
and~\eqref{K_Lip_const}, we also find
\begin{align*}
\sup_{r\in[0,\sigma]}|v_{\mu_1,\nu_1}(r)-v_{\mu_2,\nu_2}(r)|&\le\theta|u_{\mu_1,\nu_1}'(0)-u_{\mu_2,\nu_2}'(0)|_{\mathbb
R^n}+\theta\int_0^\sigma|\nu_1(s)-\nu_2(s)|\,ds
\\ &\hphantom{=}~+\theta\int_0^\sigma(|u_{\mu_1,\nu_1}(s)-u_{\mu_2,\nu_2}(s)|_{\mathbb
R^n}+|u_{\mu_1,\nu_1}'(s)-u_{\mu_2,\nu_2}'(s)|_{\mathbb
R^n})\,ds \\ &\le
\left(\frac{3\theta^2}2+\theta\right)|(\mu_1,\nu_1)-(\mu_2,\nu_2)|_{\mathcal
B}.
\end{align*}
Consequently, it must be the case that
\begin{align*}
|\mathcal C(\mu_1,\nu_1)-\mathcal C(\mu_2,\nu_2)|_{\mathcal B}\le
\left(\frac{3\theta^2}2+2\theta\right)|(\mu_1,\nu_1)-(\mu_2,\nu_2)|_{\mathcal
B},
\end{align*}
which tells us $\mathcal C$ is continuous.
\end{proof}

\begin{lemma}
The closure of the set $\mathcal CB(L_0)$ in $\mathcal B$ is a compact
subset of $\mathcal B$.
\end{lemma}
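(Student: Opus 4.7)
The plan is to apply the Arzel\`a-Ascoli theorem to the families $\{\xi_{\mu,\nu}\}_{(\mu,\nu)\in B}$, $\{\xi_{\mu,\nu}'\}_{(\mu,\nu)\in B}$, and $\{\zeta_{\mu,\nu}\}_{(\mu,\nu)\in B}$ separately. Since the norm on $\mathcal B$ controls exactly the uniform norms of the first component, its derivative, and the second component (with $\sigma$ a fixed positive constant), precompactness of $\mathcal CB$ reduces to precompactness of each of these three families in $C^0([0,\sigma])$.

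For the first two families, I would use the ODE \eqref{eq_centered_xi} together with the bound \eqref{bd_F_Theta} of Lemma~\ref{lemma_bd_F_Theta}. This gives $|\xi_{\mu,\nu}''(r)|_{\mathbb R^n}\le\Theta$ uniformly in $(\mu,\nu)\in B$ and $r\in[0,\sigma]$, so $\xi_{\mu,\nu}'$ is uniformly $\Theta$-Lipschitz on $[0,\sigma]$. Combined with the pointwise estimates \eqref{bnds_Hartman}, this supplies uniform boundedness and equicontinuity for both $\{\xi_{\mu,\nu}\}$ and $\{\xi_{\mu,\nu}'\}$, so Arzel\`a-Ascoli delivers precompactness in $C^0([0,\sigma];\mathbb R^n)$ for each.

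For the third family, I would differentiate the integral representation \eqref{def_zeta_mu,nu} to obtain
\begin{align*}
\zeta_{\mu,\nu}'(r)=-\bar h'(r)+K\big(\bar h(r)+\nu(r),\bar f(r)+\xi_{\mu,\nu}(r),\bar f'(r)+\xi_{\mu,\nu}'(r),\phi'(r)\big).
\end{align*}
The inclusion $\mathcal CB\subset B$ established in the previous lemma, Lemma~\ref{lemma_bar_h}, the second bound in~\eqref{hyp_thm_deriv}, and the estimates \eqref{bnds_Hartman} together confine every argument of $K$ to a fixed compact set, on which $K$ is bounded; $\bar h'$ is likewise bounded. Hence $\{\zeta_{\mu,\nu}\}$ is uniformly Lipschitz, and in particular equicontinuous, and $\mathcal CB\subset B$ yields uniform boundedness. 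Arzel\`a-Ascoli then furnishes precompactness of $\{\zeta_{\mu,\nu}\}$ in $C^0([0,\sigma];\mathbb R)$.

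No substantive obstacle is expected here: the argument is essentially a repackaging of the a priori bounds already established in Lemmas~\ref{lemma_bar_h} and~\ref{lemma_bd_F_Theta}. The only thing requiring care is to confirm that the arguments passed to $K$ (and, for the $C^2$-estimate of $\xi_{\mu,\nu}$, to $\tilde F$) lie inside the compact sets on which the continuity and Lipschitz estimates were arranged, which in turn follows from the choice $L=\sigma^2\Sigma$ together with the smallness of $\sigma$ enforced by $\sigma_0(\alpha,\omega_1,\omega_2)$.
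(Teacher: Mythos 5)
Your proposal is correct and follows essentially the same route as the paper: uniform bounds on $\xi_{\mu,\nu}$, $\xi_{\mu,\nu}'$, $\xi_{\mu,\nu}''$ from Lemma~\ref{lemma_bd_F_Theta} and on $\zeta_{\mu,\nu}'$ from the boundedness of $K$ on the relevant compact set (the paper phrases the latter via the Lipschitz constant $\theta_2$ together with~\eqref{eq_bar_h}), followed by the Arzel\`a--Ascoli theorem; the only cosmetic difference is that you argue precompactness of the three families separately while the paper extracts a convergent subsequence from an arbitrary sequence in $\mathcal CB$.
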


\begin{proof}
Suppose $((\mu_j,\nu_j))_{j=1}^\infty$ are pairs from $B(L_0)$. It
suffices to prove that the sequence
$((u_{\mu_j,\nu_j},v_{\mu_j,\nu_j}))_{j=1}^\infty$ has a
convergent subsequence. The mean value theorem and
Lemma~\ref{lemma_bd_F_Theta} yield the estimates
\begin{align*} |u_{\mu_j,\nu_j}(r_1)-u_{\mu_j,\nu_j}(r_2)|_{\mathbb
R^n}&\le\sup_{r\in[0,\sigma]}|u_{\mu_j,\nu_j}'(r)|_{\mathbb
R^n}|r_1-r_2|\le\frac\sigma2\Theta|r_1-r_2|, \\
|u_{\mu_j,\nu_j}'(r_1)-u_{\mu_j,\nu_j}'(r_2)|_{\mathbb
R^n}&\le\sup_{r\in[0,\sigma]}|u_{\mu_j,\nu_j}''(r)|_{\mathbb
R^n}|r_1-r_2|\le\Theta|r_1-r_2|,\qquad j=1,2,\ldots,
\end{align*}
for $r_1,r_2\in[0,\sigma]$. Recalling formulas~\eqref{eq_bar_h}
and~\eqref{K_Lip_const}, we also obtain
\begin{align*}
|v_{\mu_j,\nu_j}(r_1)&-v_{\mu_j,\nu_j}(r_2)|\le\sup_{r\in[0,\sigma]}|v_{\mu_j,\nu_j}'(r)||r_1-r_2|
\\ &\le\sup_{r\in[0,\sigma]}\big|-\bar h'(r)+K\big(\bar
h(r)+\nu_j(r),\bar f(r)+u_{\mu_j,\nu_j}(r),\bar
f'(r)+u_{\mu_j,\nu_j}'(r),\phi'(r)\big)\big||r_1-r_2|
\\ &\le\theta\sup_{r\in[0,\sigma]}(|\nu_j(r)|+|u_{\mu_j,\nu_j}(r)|_{\mathbb R^n}+|u_{\mu_j,\nu_j}'(r)|_{\mathbb R^n})|r_1-r_2| \\
&\le\theta(\sigma^2\Sigma+\sigma\Theta)|r_1-r_2|,\qquad
j=1,2,\ldots,~r_1,r_2\in[0,\sigma].
\end{align*}
It follows that the sequences $(u_{\mu_j,\nu_j})_{j=1}^\infty$,
$(u_{\mu_j,\nu_j}')_{j=1}^\infty$, and
$(v_{\mu_j,\nu_j})_{j=1}^\infty$ are equicontinuous.
Furthermore, because $\mathcal CB(L_0)$ is a subset of $B(L_0)$, they are
uniformly bounded. These facts, along with the Arzel\`a-Ascoli
theorem, imply that
$((u_{\mu_j,\nu_j},v_{\mu_j,\nu_j}))_{j=1}^\infty$ must have a
convergent subsequence.
\end{proof}

According to the lemmas above, the map $\mathcal C:B(L_0)\to\mathcal B$ is continuous,
and its image is a precompact subset of $B(L_0)$. Consequently, the Schauder theorem
(see, e.g.,~\cite[Chapter~XII, Corollary~0.1]{PH64}) allows us to conclude that there exists a pair $(u,v)\in B(L_0)$ satisfying the equality $\mathcal
C(u,v)=(u,v)$. It is easy to understand that $u$ and $v$
obey~\eqref{sys_centered} and~\eqref{BC_centered}. A simple
bootstrapping argument demonstrates that $u$ and $v$ are smooth. We
define $f=\bar f+u$ and $h=\bar h+v$. Clearly, these functions take
values in $(0,\infty)^n$ and $(0,\infty)$, respectively, and
solve~\eqref{sys_to_solve} and~\eqref{eq_Bianchi_K} under the
conditions~\eqref{BC_on_f} and~\eqref{BC_on_h}. Thus,
Lemma~\ref{lemma_exist_hf} is established.

\subsection{Proof of Theorems~\ref{thm_loc_ex_int}
and~\ref{thm_loc_ex_bdy}}\label{subsec_thm_loc_pf}

Given $\tau\in[0,\sigma]$ and $\kappa>0$, set
\begin{align*}
J_\kappa^\tau=(\tau-\kappa,\tau+\kappa)\cap[0,\sigma], \qquad \mathcal X_\kappa^\tau=J_\kappa^\tau\times \mathcal G/\mathcal H.
\end{align*}
Obviously, $\mathcal X_\kappa^\tau$ is a neighbourhood of
$\Gamma^\tau$ in $M$. Assume $G^\tau$ is a
Riemannian metric on $M$ and $G^\tau_{\Gamma^\tau}$ is the metric on
$\Gamma^\tau$ induced by $G^\tau$. Let $\II(G^\tau)$ be the second fundamental form
of $\Gamma^\tau$ in $M$ with respect to $G^\tau$ and to the unit normal whose scalar
product with $\big(\frac\partial{\partial
r},0\big)$ is less than~0.

For each $\tau\in[0,\sigma]$, consider a symmetric positive-definite
$\mathcal G$-invariant $(0,2)$-tensor field $R^\tau$ on $\Gamma^\tau$. In
order to keep our notation consistent, we assume $R^0$ and $R^\sigma$ are
the restrictions of $R$ to $\{0\}\times \mathcal G/\mathcal H$ and $\{\sigma\}\times \mathcal G/\mathcal H$. It is evident
that $R^\tau$ is fully determined by how
it acts on $\mathfrak p$. There exist
numbers $a_{\tau,1},\ldots,a_{\tau,n}>0$ satisfying
\begin{align*}
R^\tau(X,Y)&=a_{\tau,1}^2\,Q\big(\pr_{\mathfrak
p_1}X,\pr_{\mathfrak
p_1}Y\big)+\cdots+a_{\tau,n}^2\,Q\big(\pr_{\mathfrak
p_n}X,\pr_{\mathfrak p_n}Y\big),\qquad X,Y\in\mathfrak p.
\end{align*}
Let us also fix, for every $\tau\in[0,\sigma]$, a symmetric $\mathcal G$-invariant
tensor field $S^\tau$ on $\Gamma^\tau$. There are
$\delta_{\tau,1},\ldots,\delta_{\tau,n}\in\mathbb R$ such that
\begin{align*}
S^\tau(X,Y)&=\delta_{\tau,1}\,Q\big(\pr_{\mathfrak
p_1}X,\pr_{\mathfrak
p_1}Y\big)+\cdots+\delta_{\tau,n}\,Q\big(\pr_{\mathfrak
p_n}X,\pr_{\mathfrak p_n}Y\big),\qquad X,Y\in\mathfrak p.
\end{align*}

Proposition~\ref{prop_local_ex}, which we are about to state,
underlies Theorems~\ref{thm_loc_ex_int} and~\ref{thm_loc_ex_bdy}.
The author's paper~\cite{AP11} contains similar results, though established in a different setting. Another closely related theorem was obtained in~\cite{MAMH08}. The methods we use in the present paper are different from those of~\cite{AP11}, as we explain in Remark~\ref{rem_methods} below.

\begin{proposition}\label{prop_local_ex}
Suppose $\tau\in[0,\sigma]$. The following two statements are equivalent:
\begin{enumerate}
\item\label{stt1_prop_loc}
For some $\kappa>0$, there exists a $\mathcal G$-invariant Riemannian
metric $G^\tau$ on $M$ such that
$\Ric(G^\tau)=T$ on $\mathcal X_\kappa^\tau$, $G_{\Gamma^\tau}^\tau=R^\tau$, and $\II(G^\tau)=S^\tau$.
\item\label{stt2_prop_loc}
The inequality
\begin{align}\label{nec_suf_quant}
\sum_{k=1}^nd_k\Bigg(\frac{\beta_k}{2a_{\tau,k}^2}
+\sum_{l,m=1}^n\gamma_{k,l}^m\frac{a_{\tau,k}^4-2a_{\tau,l}^4}{4a_{\tau,k}^2a_{\tau,l}^2a_{\tau,m}^2}
-\sum_{l=1}^nd_l\frac{\delta_{\tau,k}\delta_{\tau,l}}{a_{\tau,k}^2a_{\tau,l}^2}+\frac{\delta_{\tau,k}^2}{a_{\tau,k}^4}
-\frac1{a_{\tau,k}^2}\phi_k(\tau)\Bigg)<0
\end{align}
is satisfied.
\end{enumerate}
If these statements hold and $\check{G}^\tau$ is a
$\mathcal G$-invariant metric on $M$ such that
$\Ric(\check{G}^\tau)=T$ on $\mathcal X_\kappa^\tau$,
$\check{G}^\tau_{\Gamma^\tau}=R^\tau$, and
$\II(\check{G}^\tau)=S^\tau$, then
$\check{G}^\tau$ must coincide with $G^\tau$ on $\mathcal X_\kappa^\tau$.
\end{proposition}

\begin{proof}
Suppose there exist $\kappa>0$ and a $\mathcal G$-invariant Riemannian
metric $G^\tau$ on $M$ such that
$\Ric(G^\tau)=T$ on $\mathcal X_\kappa^\tau$, $G_{\Gamma^\tau}^\tau=R^\tau$, and $\II(G^\tau)=S^\tau$. Employing Lemma~\ref{lemma_Ricci_comp} and the fact
that $T$ is positive-definite, one can show that $G^\tau$
satisfies the formula
\begin{align}\label{cal_Ht_form1}
G^\tau=h_\tau^2(r)\,dr\otimes dr+G_r^\tau,\qquad r\in
J_\kappa^\tau.
\end{align}
Here, $h_\tau$ is a smooth function acting from $J_\kappa^\tau$ to
$(0,\infty)$. The tensor field $G_r^\tau$ is a
$\mathcal G$-invariant Riemannian metric on $\mathcal G/\mathcal H$. It is clear that
\begin{align}\label{cal_Ht_form2}
G_r^\tau(X,Y)=f_{\tau,1}^2(r)\,Q\big(\pr_{\mathfrak
p_1}X,\pr_{\mathfrak
p_1}Y\big)+\cdots+f_{\tau,n}^2(r)\,Q\big(\pr_{\mathfrak
p_n}X,\pr_{\mathfrak p_n}Y\big),\qquad X,Y\in\mathfrak p,
\end{align}
for some smooth functions $f_{\tau,1},\ldots,f_{\tau,n}$ from
$J_\kappa^\tau$ to $(0,\infty)$. The equality $\Ric(G^\tau)=T$ and Lemma~\ref{lemma_Ricci_comp} imply
\begin{align}\label{pf_loc_aux1}
H_1(f_\tau(r),f_\tau'(r))=h_\tau^2(r)H_2(f_\tau(r),\phi(r)),\qquad
r\in J_\kappa^\tau.
\end{align}
The notation $f_\tau$ here stands for
$(f_{\tau,1},\ldots,f_{\tau,n})$. Because $G_{\Gamma^\tau}^\tau=R^\tau$ and $\II(G^\tau)=S^\tau$, we also have
\begin{align*}
f_\tau(\tau)=a_\tau,\qquad
f_\tau'(\tau)=-h_\tau(\tau)\delta_\tau^a,
\end{align*}
where $a_\tau=(a_{\tau,1},\ldots,a_{\tau,n})$ and
$\delta_\tau^a=\big(\frac{\delta_{\tau,1}}{a_{\tau,1}},\ldots,\frac{\delta_{\tau,n}}{a_{\tau,n}})$.
Keeping these two formulas in mind and using~\eqref{pf_loc_aux1}, we
easily calculate that the quantity in the left-hand side
of~\eqref{nec_suf_quant} is equal to
$-\frac1{h_\tau^2(\tau)}$. This quantity must, therefore, be
negative.

Assume now that~\eqref{nec_suf_quant} holds. Let us prove the
existence of $\kappa>0$ and a metric $G^\tau$ on
$M$ such that $\Ric(G^\tau)=T$ on
$\mathcal X_\kappa^\tau$, $G_{\Gamma^\tau}^\tau=R^\tau$,
and $\II(G^\tau)=S^\tau$. Consider the system of ordinary differential
equations
\begin{align}\label{loc_ode}
f_\tau''(r)&=\bar F(h_\tau(r),f_\tau(r),f_\tau'(r),\phi(r),\phi'(r)), \notag \\
h_\tau'(r)&=K(h_\tau(r),f_\tau(r),f_\tau'(r),\phi'(r)),
\end{align}
for the unknown functions $f_\tau$ and $h_\tau$. We supplement this
system with the conditions
\begin{align}\label{loc_ini_cond}
f_\tau(\tau)&=a_\tau, \notag \\
f_\tau'(\tau)&=-(H_2(a_\tau,\phi(\tau))+(1-H_1(a_\tau,\delta_\tau^a)))^{-\frac12}\delta_\tau^a, \notag \\
h_\tau(\tau)&=(H_2(a_\tau,\phi(\tau))+(1-H_1(a_\tau,\delta_\tau^a)))^{-\frac12}.
\end{align}
Note that, thanks to~\eqref{nec_suf_quant}, the right-hand sides of
the last two formulas are well-defined. The standard theory of
ordinary differential equations tells us that problem~\eqref{loc_ode}--\eqref{loc_ini_cond}
has a solution. To be more precise, for some number
$\kappa>0$, there exist smooth functions
$f_\tau:J_\kappa^\tau\to(0,\infty)^n$ and
$h_\tau:J_\kappa^\tau\to(0,\infty)$ solving~\eqref{loc_ode} on
$J_\kappa^\tau$ and satisfying~\eqref{loc_ini_cond}. With these
functions at hand, we define a $\mathcal G$-invariant Riemannian metric
$G^\tau$ on $\mathcal X_\kappa^\tau$ by
formulas~\eqref{cal_Ht_form1} and~\eqref{cal_Ht_form2}. We extend it to all of $M$ arbitrarily. It
follows from~\eqref{loc_ode} that
\begin{align*}
\Ric(G^\tau)=\hat\sigma(r)\,dr\otimes dr+T_r,\qquad
r\in J_\kappa^\tau,
\end{align*}
for some $\hat\sigma:J_\kappa^\tau\to\mathbb R$. Employing
Lemma~\ref{lemma_Bianchi} and arguing as in
Section~\ref{subsec_less_lemma}, one demonstrates that $\hat\sigma$
must be identically equal to~1 on $J_\kappa^\tau$. This means
$\Ric(G^\tau)=T$ on $\mathcal X_\kappa^\tau$.
Conditions~\eqref{loc_ini_cond} imply that $G_{\Gamma^\tau}^\tau=R^\tau$ and $\II(G^\tau)=S^\tau$.

Suppose now that statements~\ref{stt1_prop_loc}
and~\ref{stt2_prop_loc} in Proposition~\ref{prop_local_ex} hold
true. We may assume the metric $G^\tau$
satisfies~\eqref{cal_Ht_form1} and~\eqref{cal_Ht_form2}. Then the
functions $f_\tau$ and $h_\tau$
solve~\eqref{loc_ode}--\eqref{loc_ini_cond} on $J_\kappa^\tau$.
Consider a $\mathcal G$-invariant Riemannian metric $\check{G}^\tau$
on $M$ such that $\Ric(\check{G}^\tau)=T$ on $\mathcal X_\kappa^\tau$, $\check{G}_{\Gamma^\tau}^\tau=R^\tau$, and
$\II(\check{G}^\tau)=S^\tau$. Our objective
is to show that $\check{G}^\tau$ coincides with $G^\tau$ on $\mathcal X_\kappa^\tau$. By analogy with~\eqref{cal_Ht_form1}, we write
\begin{align*}
\check{G}^\tau=\check h_\tau^2(r)\,dr\otimes
dr+\check{G}_r^\tau,\qquad r\in J_\kappa^\tau.
\end{align*}
In the right-hand side, $\check h_\tau:J_\kappa^\tau\to(0,\infty)$
is a smooth function. The tensor field $\check{G}_r^\tau$
is a $\mathcal G$-invariant Riemannian metric on $\mathcal G/\mathcal H$. There are smooth
functions $\check f_{\tau,1},\ldots,\check f_{\tau,n}$ from
$J_\kappa^\tau$ to $(0,\infty)$ such that
\begin{align*}
\check{G}_r^\tau(X,Y)=\check
f_{\tau,1}^2(r)\,Q\big(\pr_{\mathfrak p_1}X,\pr_{\mathfrak
p_1}Y\big)+\cdots+\check f_{\tau,n}^2(r)\,Q\big(\pr_{\mathfrak
p_n}X,\pr_{\mathfrak p_n}Y\big),\qquad X,Y\in\mathfrak p.
\end{align*}
It will be convenient for us to denote $\check f_\tau=\big(\check
f_{\tau,1},\ldots,\check f_{\tau,n}\big)$. Because
$\Ric(\check{G}^\tau)=T$, $\check{G}_{\Gamma^\tau}^\tau=R^\tau$, and
$\II(\check{G}^\tau)=S^\tau$,
formulas~\eqref{loc_ode}--\eqref{loc_ini_cond} would still hold on
$J_\kappa^\tau$ if we substituted $\check f_\tau$, $\check f_\tau'$,
$\check f_\tau''$, $\check h_\tau$, and $\check h_\tau'$ in them for
$f_\tau$, $f_\tau'$, $f_\tau''$, $h_\tau$, and $h_\tau'$. The
standard theory of ordinary differential equations then implies that
$\check f_\tau=f_\tau$ and $\check h_\tau=h_\tau$ on
$J_\kappa^\tau$. Consequently, $\check{G}^\tau$ coincides
with $G^\tau$ on $\mathcal X_\kappa^\tau$. Thus, the proof is complete.
\end{proof}

\begin{remark}\label{rem_methods} One may establish
Proposition~\ref{prop_local_ex} by adapting the methods employed in the
paper~\cite{AP11}. The main idea behind those methods is to modify the right-hand side of~\eqref{intro_PRC} by a $G$-dependent diffeomorphism making the equation more easily solvable. Such an approach relies on the work of DeTurck (see~\cite[Chapter~5]{AB87} for an overview) and is similar in spirit to the DeTurck trick for the Ricci flow.
Conversely, it seems possible to prove the existence and uniqueness results in~\cite{AP11} with the techniques employed above.
\end{remark}

\begin{remark}
Suppose we are in the situation described in Remark~\ref{rem_abelian}. Thus, $\mathcal G/\mathcal H$ is an abelian Lie group, and Hypothesis~\ref{assum_decomp_p} fails to hold. In this case,
statement~\ref{stt2_prop_loc} of Proposition~\ref{prop_local_ex}
implies statement~\ref{stt1_prop_loc}, but
establishing the converse implication may be problematic. Roughly
speaking, this is because, when Hypothesis~\ref{assum_decomp_p} does
not hold, the metric $G^\tau$ need not be diagonal with
respect to~\eqref{p_decomp}. For the same reason, proving the
assertion about
$\check{G}^\tau$ may be troublesome with our methods.
\end{remark}

\begin{remark}
Statement~\ref{stt1_prop_loc} of Proposition~\ref{prop_local_ex} is equivalent to
statement~\ref{stt2_prop_loc} even if $T$ is not positive-definite. Yet our methods do not yield the
assertion about $\check{G}^\tau$ in this case. Recall that, if $T$ is not positive-definite, we need to assume the existence of a diffeomorphism $\Psi$ satisfying~\eqref{dif_def}, \eqref{T_form1} and~\eqref{T_form2}.
\end{remark}

Theorems~\ref{thm_loc_ex_int}
and~\ref{thm_loc_ex_bdy} follow from Proposition~\ref{prop_local_ex} by choosing
$R^\tau$ and $S^\tau$ in such a way that
\begin{align*}
R^\tau(X,Y)&=\bigg(\frac{\sigma-\tau}{\sigma}a_1+\frac{\tau}{\sigma}
b_1\bigg)\,Q\big(\pr_{\mathfrak p_1}X,\pr_{\mathfrak
p_1}Y\big)\\ &\hphantom{=}~+\cdots +\bigg(\frac{1-\tau}{\sigma}a_n+\frac{\tau}{\sigma} b_n\bigg)\,Q\big(\pr_{\mathfrak
p_n}X,\pr_{\mathfrak p_n}Y\big),\\
S^\tau(X,Y)&=\beta\,Q(X,Y),\qquad
X,Y\in\mathfrak p,
\end{align*}
for a sufficiently large $\beta>0$.

\end{document}